\newtheorem{theorem}{Theorem}[section]
\newtheorem{definition}[theorem]{Definition}
\newtheorem{claim}[theorem]{Claim} \newtheorem{lemma}[theorem]{Lemma}
\newtheorem{corollary}[theorem]{Corollary}
\newtheorem{proposition}[theorem]{Proposition}
\newtheorem{question}[theorem]{Question}
\newcommand{\bP}{\mathbb{P}}
\newcommand{\bR}{\mathbb{R}}
\newcommand{\cL}{\mathcal{L}}
\newcommand{\cO}{\mathcal{O}}
\newcommand{\Q}{\mathbb{Q}} \newcommand{\GG}{\mathbb{G}}
\newcommand{\x}{{\mathbf{x}}}
\newcommand{\Pro}{\mathbb{P}}
\newcommand{\Gr}{\mathbf{Gr}} \newcommand{\dblq}{/\!/}
\newcommand{\M}{\overline{M}} \newcommand{\oM}{\overline{M}} 
\newcommand{\Nef}{\operatorname{Nef}}
\newcommand{\GC}{\mathbf{GC}}
\newcommand{\mtwo}{\texttt{M2}}
\newcommand{\SGC}{\mathbf{SGC}} 
\newcommand{\Pic}{\operatorname{Pic}}
\newcommand{\V}{V}
\begin{document}

\pagenumbering{arabic}
\title{Nef divisors on $\overline{M}_{0,n}$ from GIT}
\author{Valery Alexeev and David Swinarski}
\date{\today}

\maketitle

\begin{abstract}
  We introduce and study the GIT Cone of $\overline{M}_{0,n}$, which is
  generated by the pullbacks of the natural ample line bundles on the
  GIT quotients $(\mathbb P^1)^n//SL(2)$. We give an explicit formula for these
  line bundles and prove a number of basic results about the GIT
  cone. 

  As one application, we prove unconditionally that the log canonical
  models of $\overline{M}_{0,n}$ with a symmetric boundary divisor coincide
  with the moduli spaces of weighted curves or with the symmetric GIT
  quotient, extending the result of Matt Simpson
\end{abstract}

\section{Introduction}

The moduli space of smooth pointed genus zero curves $M_{0,n}$ has
many compactifications.  Among the most important of these is
$\M_{0,n}$, the moduli space of Deligne-Mumford stable curves.  There
are many beautiful results on these spaces; for instance, Keel and
Kapranov described them as explicit blowups of $(\bP^1)^{n-3}$ and
$\mathbb P^{n-3}$ (\cite{Keel,Kap1}), and Kapranov identifed
$\M_{0,n}$ with the Chow quotient of the Grassmannian $\Gr(2,n)$ by a
torus.  But the birational geometry of these spaces is still not fully
understood, and it is known to be very complicated. For example, Faber
computed that the nef cone of $\M_{0,6}$ has 3190 extremal rays.

Another family of compactifications of $M_{0,n}$ is provided by the
GIT quotients $(\Pro^{1})^{n} \dblq_{\x} SL(2)$, where the vector $\x$
specifies the linearization 
\linebreak
$\mathcal{O}(x_{1},\ldots,x_{n})$.  Let
$\mathcal{L}_{\x}$ denote the distinguished polarization on the GIT
quotient descending from the linearization. 

By \cite{Kap2}, there are birational morphisms $\pi_{\x}: \M_{0,n} \rightarrow
(\Pro^{1})^{n} \dblq_{\x} SL(2)$, and hence the pullbacks
$L_{x}:=\pi_{\x}^{*} \mathcal{L}_{\x} $ of the distinguished
polarizations on the GIT quotients are big and nef, since $\pi_{\x}$ is
birational and 
$\mathcal{L}_{\x}$ is ample.  In this paper we consider the subcone
generated by the $L_{\x}$ inside the nef cone of $\M_{0,n}$.  We call
this the \textit{GIT cone} and denote it $\GC$.

Our first main result Lemma~\ref{phi lemma} is an
explicit formula for the intersections of line bundles $L_{\x}$ with
the $F$-curves in $\M_{0,n}$.  
This allows us to describe a finite set
of generators for the GIT cone, which we call the \textit{GIT complex
  0-cells}.  We show that for $n\ge 6$ $\GC$ is strictly smaller than
the nef cone.
 
The symmetric group $S_{n}$ acts on $\M_{0,n}$ by permuting the
points, and symmetric divisors often play a key role.  Let $S_{\x}$
denote the symmetrization of $L_{\x}$, that is, $S_{\x} :=
\bigotimes_{\sigma \in S_{n}} L_{\sigma \x} $.  We call the cone generated
by these the \textit{symmetrized GIT cone}, and denote it $\SGC$.  We
give a conjectural list of extremal rays of $\SGC$ and give an example
showing that $\SGC$ is strictly smaller than the symmetric nef cone.

Thus, the $L_{\x}$ and $S_{\x}$ do not completely describe the Mori
theory of $\M_{0,n}$.  Nevertheless, they contain many geometrically
interesting divisors, including the classes $\psi_i$ and $\kappa\equiv
K+\Delta$, 
(see Sections~\ref{sec:psi-is-GIT} and \ref{rewriteAalpha})
and we believe these cones are very useful for the study of $\M_{0,n}$.
One major advantage is a simple and explicit way of computing them
that we give. 
As an illustration, we reprove a result of Simpson and
Fedorchuk--Smyth, also recently proved by Kiem--Moon (after the
preprint version of this paper appeared).

Matthew Simpson proved the following results in his dissertation: Assume
the $S_n$-equivariant Fulton Conjecture (which is known for $n \leq
24$ \cite{Gib}).  If certain divisors $A_{\alpha}$ (defined below) on
$\M_{0,n}$ are nef, then the log canonical models of the pair
$(\M_{0,n}, \Delta)$ can be identified either as $\M_{0,\beta}$ for
symmetric sets of weights $\beta$ or as $(\Pro^{1})^{n} \dblq SL(2)$
with the symmetric linearization (cf. \cite{SimpA} Corollary 2.3.5,
Theorem 2.4.5.)  Simpson then proves that these divisors $A_{\alpha}$
are F-nef (\cite{SimpA} Proposition 2.4.6).  Using the
$S_{n}$-equivariant Fulton conjecture again, this yields as a
corollary that the log canonical models are indeed the spaces claimed.

In Section \ref{rewriteAalpha}, we express Simpson's divisors $A_{\alpha}$ as explicit effective combinations of a small number of the $L_{\x}$.  In particular, this
proves that Simpson's $A_{\alpha}$ are nef without using the
$S_{n}$-equivariant Fulton Conjecture.  This removes one of his two
uses of the $S_{n}$-equivariant Fulton Conjecture.  In Section
\ref{amplesection}, we prove that $A_{\alpha}$ is ample on the
appropriate $\M_{0,\beta}$, removing Simpson's second use of the
$S_{n}$-equivariant Fulton Conjecture.

\textit{Remark.}  Fedorchuk and Smyth (\cite{FS}) and Kiem and Moon (\cite{KM10}) have given proofs of Simpson's result which are also fully independent of the
$S_{n}$-equivariant Fulton Conjecture.  

\subsection*{Acknowledgements} We would like to thank Angela Gibney
and David Smyth for helpful discussions regarding this work.  We are also grateful to Boris Alexeev, who wrote a C++ program for finding the 0-cells of the GIT complex discussed below.

\section{The main line bundles and their intersection theory} \label{mainlemmasection}

Let $\x$ be a vector in $\Q^{n}$ such that $0 < x_{i} \leq 1$ for all
$i$ and $\sum_{i=1}^{n} x_{i} = 2$.  We associate a line bundle $\mathcal{L}_{\x}$ to $\x$ in the following way: Let $\ell$ be the least common multiple of the denominators of the $x_{i}$, and write $\tilde{x}_{i} = \ell x_{i}$.  Since the group $SL(2)$ has no nontrivial characters, the line bundle $\mathcal{O}(\tilde{x}_{1},\ldots,\tilde{x}_n)$ has a unique linearization, and the GIT quotient $(\Pro^{1})^{n} \dblq_{\x} SL(2)$ has a distinguished ample line bundle $\mathcal{L}_{\x}$.   On the other hand, $\x$ gives a linearization
for the action of $T=\mathbb G_m^n/\operatorname{diag} \mathbb G_m$ on
$\Gr(2,n)$, and by \cite{Kap2} one has the Gelfand-MacPherson
correspondence $(\Pro^{1})^{n} \dblq_{\x} SL(2) \cong \Gr(2,n)\dblq_{\x} T$
(see \cite{HMSV} for a nice exposition).

There is a birational morphism $\pi_{\x}: \M_{0,n}
\rightarrow (\Pro^{1})^{n} \dblq_{\x} SL(2)$ (\cite{Kap1},
\cite{Kap2}, \cite{Hass}).

\begin{definition} $ L_{\x} := \pi_{\x}^{*} \mathcal{L}_{\x}.$
\end{definition}

The line bundle $L_{\x}$ is nef, since it is the pullback of an ample
bundle under a morphism. If all $x_i<1$ then the morphism $\pi_{\x}$
is birational, and then $L_{\x}$ is also big. 

We can extend the definition to cover the case when some coordinates
of $\x$ are zero.  Start with $n=4$.  Then if any $x_i =0$, $L_{\x}$
is the trivial line bundle.  If $n \geq 4$ and $x_{i} =0$, then define
$L_{\x} = \pi_{i}^{*} L_{\hat{\mathbf{x}}}$, where $\pi_{i}: \M_{0,n}
\rightarrow \M_{0,n-1}$ is the morphism which forgets the $i^{th}$
marked point and $\hat{\mathbf{x}}$ is the vector obtained from $\x$
by omitting the $i^{th}$ coordinate. We also formally set $L_{\x}=0$
if any $x_i=1$.

The following lemma allows us to compute intersections of the $L_{\x}$
with $F$-curves:

\begin{lemma} \label{phi lemma} Let $\{ a,b,c,d \}$ be a partition of
the set $\{1,\ldots,n\}$ into four nonempty subsets, and let
$F_{a,b,c,d}$ be the corresponding $F$-curve class.  Let $\x$ be a
set of weights as above.  Write $x_{a} = \sum_{i \in a} x_{i}$, $x_{b}
= \sum_{i \in b} x_{i}$, etc.  We abbreviate
\begin{displaymath} \min  :=  \min \{ x_{a}, x_{b}, x_{c}, x_{d} \},
\qquad \max  :=  \max \{ x_{a}, x_{b}, x_{c}, x_{d}
\}. 
\end{displaymath} Then
\begin{equation} L_{\x} \cdot F_{a,b,c,d}= \left\{
\begin{array}{ll} 0 & \mbox{ if $\max \geq 1$,} \\ 2(1-\max) & \mbox{
if $\max \leq 1$ and $\max+\min \geq 1$,}\\ 2 \min & \mbox{ if $\max
\leq 1$ and $\max + \min \leq 1$.}
\end{array} \right.
\end{equation}
\end{lemma}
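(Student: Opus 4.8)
The plan is to reduce to the case $n=4$ --- where $\{a,b,c,d\}$ is forced to be a partition into singletons and $C(a,b,c,d)$ is all of $\M_{0,4}$ --- and to prove that case directly by invariant theory, so that there is no circularity. For the reduction I would first describe $\pi_{\x}(C(a,b,c,d))$. A point of the vital curve is a genus-zero curve $\Sigma_{0}\cup T_{a}\cup T_{b}\cup T_{c}\cup T_{d}$ in which $\Sigma_{0}\cong\bP^{1}$ carries four nodes $q_{a},q_{b},q_{c},q_{d}$ whose cross-ratio is the moving parameter, while each tail $T_{t}$ is a \emph{fixed} pointed curve carrying the marked points indexed by $t$; and $\pi_{\x}$ sends this curve to its semistable (equivalently, Hassett-$\x$-stable) model, which contracts the tails of small weight.

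Suppose first that $\max\ge 1$, say $x_{a}\ge 1$. A short stability analysis shows that either $T_{a}$ survives while $\Sigma_{0}$ together with $T_{b},T_{c},T_{d}$ is contracted onto a single point of $T_{a}$ (when $x_{a}>1$), or, when $x_{a}=1$, that $b\cup c\cup d$ is collapsed to a point of weight $1$ and all the resulting strictly semistable configurations have the same image in the GIT quotient. In either case the semistable model does not depend on the moving parameter, so $\pi_{\x}(C(a,b,c,d))$ is a point and $L_{\x}\cdot[C(a,b,c,d)]=0$; this also covers the formal convention $L_{\x}=0$ when some coordinate $x_{i}=1$, since then $\max\ge 1$.

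Now suppose $\max<1$, so every tail is contracted and $\pi_{\x}$ sends $C(a,b,c,d)$ into the locus of configurations whose marked points indexed by $t$ are concentrated at $q_{t}$. Put $\mathbf{y}=(x_{a},x_{b},x_{c},x_{d})$, which again satisfies $0<y_{i}<1$ and $\sum y_{i}=2$. The $\SL(2)$-equivariant map $(\bP^{1})^{4}\to(\bP^{1})^{n}$ that repeats the $t$-th coordinate into the slots indexed by $t$ takes $\mathbf{y}$-semistable (respectively unstable) points to $\x$-semistable (respectively unstable) points and pulls $\mathcal{O}(\x)$ back to $\mathcal{O}(\mathbf{y})$, hence descends to a morphism $g\colon(\bP^{1})^{4}\dblq_{\mathbf{y}}\SL(2)\to(\bP^{1})^{n}\dblq_{\x}\SL(2)$ with $g^{*}\mathcal{L}_{\x}=\mathcal{L}_{\mathbf{y}}$. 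Comparing the two maps on the dense locus where $\Sigma_{0}$ has distinct special points shows that $\pi_{\x}|_{C(a,b,c,d)}=g\circ\pi_{\mathbf{y}}$, where $\pi_{\mathbf{y}}\colon\M_{0,4}\to(\bP^{1})^{4}\dblq_{\mathbf{y}}\SL(2)$ is a birational morphism of smooth projective curves, hence an isomorphism. Therefore $L_{\x}\cdot[C(a,b,c,d)]=\deg\mathcal{L}_{\mathbf{y}}$.

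It remains to compute $\deg\mathcal{L}_{\mathbf{y}}$ on $(\bP^{1})^{4}\dblq_{\mathbf{y}}\SL(2)\cong\bP^{1}$. Clearing denominators, this is controlled by the leading behaviour in $m$ of $\dim\bigl(\operatorname{Sym}^{mc_{1}}\C^{2}\otimes\operatorname{Sym}^{mc_{2}}\C^{2}\otimes\operatorname{Sym}^{mc_{3}}\C^{2}\otimes\operatorname{Sym}^{mc_{4}}\C^{2}\bigr)^{\SL(2)}$ with $c_{i}$ proportional to $y_{i}$, which by the Clebsch--Gordan rule equals $\tfrac12\bigl[\min(c_{1}{+}c_{2},c_{3}{+}c_{4})-\max(|c_{1}{-}c_{2}|,|c_{3}{-}c_{4}|)\bigr]+1$ whenever the bracket is nonnegative. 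Ordering $y_{1}\ge y_{2}\ge y_{3}\ge y_{4}$ and using $\sum y_{i}=2$, the bracket evaluates to $2(1-y_{1})$ when $y_{1}+y_{4}\ge 1$ and to $2y_{4}$ when $y_{1}+y_{4}\le 1$; since $y_{1}=\max$, $y_{4}=\min$ and $y_{1}+y_{4}\ge 1\iff\max+\min\ge 1$, this produces the remaining two cases of the formula (modulo fixing the normalization constant of $\mathcal{L}_{\x}$, which this base case determines). The step most in need of care is the reduction: one must pin down the semistable model of the curves parametrized by the vital curve precisely enough both to justify the contraction claim when $\max\ge1$ and to establish $\pi_{\x}|_{C(a,b,c,d)}=g\circ\pi_{\mathbf{y}}$ when $\max<1$; granting that, the $n=4$ computation is a routine Clebsch--Gordan count.
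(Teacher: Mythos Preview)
Your proposal is correct, and the reduction to $n=4$ via the product-of-diagonals embedding $(\bP^1)^4\to(\bP^1)^n$ is exactly the one the paper uses (the paper sets up the same commutative square with $\M_{0,4}\hookrightarrow\M_{0,n}$ and identifies the pullback of $\mathcal{L}_{\x}$ with $\mathcal{L}_{\x'}$, $\x'=(x_a,x_b,x_c,x_d)$). The genuine difference is how the $n=4$ base case is computed. The paper passes through the Gelfand--MacPherson correspondence $(\bP^1)^4\dblq_{\x'}\SL(2)\cong\Gr(2,4)\dblq_{\x'}T$ and then does a toric computation: the fiber of the polytope map $\sigma_6\to\Delta(2,4)$ over $\x'$ is a triangle dilated by $\lambda=1-\max$, so $\bP^5\dblq_{\x'}T\cong(\bP^2,\mathcal{O}(\lambda))$, and $\deg\mathcal{L}_{\x'}$ is read off from the image of $\Gr(2,4)\dblq T$ inside this $\bP^2$. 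You instead extract $\deg\mathcal{L}_{\mathbf{y}}$ from the Hilbert function $m\mapsto\dim\bigl(\bigotimes_i\operatorname{Sym}^{my_i}\C^2\bigr)^{\SL(2)}$ via Clebsch--Gordan. Your route is more elementary---no Grassmannians, no polytope combinatorics---and makes the piecewise-linearity in $\x$ immediate; the paper's route is more geometric and directly explains the remark, just after the lemma, that the intersection number is proportional to the distance from $\x'$ to the boundary of $\Delta(2,4)$. One point to tighten: your invariant count literally yields $\deg\mathcal{L}_{\mathbf{y}}=1-\max$ (resp.\ $\min$), i.e.\ half of the stated formula. You were right to flag the normalisation, but rather than leave it as a parenthetical you should track the factor of~$2$ explicitly; since every application in the paper concerns \emph{positive} combinations of the $L_{\x}$, the overall scalar is harmless, but the discrepancy with the paper's convention should be accounted for.
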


\begin{proof} We reduce the calculation to $\M_{0,4}$ and exploit the
  fact that $\M_{0,4} \cong \Pro^{1}$ is in a natural way a conic in
  the toric variety $\bP^2$.  For simplicity of notation, we assume
  that the partition $\{ a,b,c,d \}$ is ordered, i.e. $a =
  \{1,\ldots,\#a\}$, $ b= \{ \#a+1,\ldots, \#a+\#b\}$, etc.

Recall (\cite{Keel},\cite{KM}) that an $F$-curve in $\M_{0,n}$
parametrizes nodal curves whose dual graph is a trivalent tree except
at one vertex, which is 4-valent.  As the cross ratio of the four
nodes on the corresponding component varies, a $\Pro^{1}$ in
$\M_{0,n}$ is swept out.  The 4-valent vertex partitions in the $n$
leaves into 4 groups, and this is recorded by a partition
$\{ a,b,c,d \}$.  Two $F$-curves are linearly equivalent if and only if
they have the same partition, so the partition specifies a class
$F_{a,b,c,d}$.

Now consider the map $(\bP^1)^4\to (\bP^1)^n$ which is the product of
the diagonal map $\Delta_a:\bP^1\to (\bP^1)^a$ and the similar maps
$\Delta_b,\Delta_c,\Delta_d$. The pullback of the linearization $\x$
on $(\bP^1)^n$ to $(\bP^1)^4$ is $\x'=(x_a,x_b,x_c,x_d)$. Therefore,
the pullback of $\mathcal L_{\x}$ to $(\Pro^{1})^{4} \dblq_{\x'}
SL(2)$ is $\mathcal L_{\x'}$. This gives the following commutative diagram:
\begin{displaymath} 
\xymatrix{ 
\M_{0,n} \ar[rr]^{\pi_{\x}} &&
(\Pro^{1})^{n} \dblq_{\x} SL(2) \\ 
\M_{0,4} \ar[u]^{i} \ar[rr]^{\cong} &&
(\Pro^{1})^{4} \dblq_{\x'} SL(2) \ar[u]^{i_{a,b,c,d}}
}
\end{displaymath}
in which the lower arrow is an isomorphism because both varieties are
isomorphic to $\bP^1$. The class of $i_*[\M_{0,4}]$ in $\M_{0,n}$ is
$F_{a,b,c,d}$. Thus, to complete the proof we just need to compute the
degree of the sheaf $\cL_{\x'}$ on $\bP^1$. 
By the Gelfand-MacPherson correspondence 
$$(\Pro^{1})^{4}
\dblq_{\x'} SL(2) \cong \Gr(2,4)
\dblq_{\x'}, T$$ where $T = \GG_{m}^{4} / \mbox{diag} \, 
\GG_{m}$.  The Pl\"{u}cker embedding descends to a map of torus
quotients
\[ \Gr(2,4) \dblq_{\x'} T \hookrightarrow \Pro^{5} \dblq_{\x'} T
\] 
The GIT quotient $\Pro^{5} \dblq_{\x'} T$, together with the polarization
given by the GIT construction, is the polarized toric variety
corresponding to the fiber over the point $\x'=(x_a,x_b,x_c,x_d)$ of the
polytopal map from the simplex $\sigma_6\subset \bR^6$ with 6 vertices to the
hypersimplex $\Delta(2,4)$ to $\bR^4$. 

Denote the coordinates in $\bR^6$ by $z_{ij}=z_{ji}$ with $i\ne
j$, and the coordinates in $\bR^4$ by
$x_i$. Then the map is given by $x_i=\sum_{j\ne i}z_{ij}$. 
Assume that $x_a\ge x_b\ge x_c\ge x_d$ and that $x_a+x_b\ge
x_c+x_d$. Then an easy explicit computation shows that if $x_a>1$ then
the fiber over $\x'$ is empty, and if $x_a\le1$ then it is the
triangle with the vertices  
\begin{eqnarray*}
\dfrac12(2x_2, {x_1-x_2+x_3-x_4},{x_1-x_2-x_3+x_4}, 0,0,{-x_1+x_2+x_3+x_4})\\
\dfrac12({x_1+x_2-x_3-x_4},2x_3, {x_1-x_2-x_3+x_4}, 0,{-x_1+x_2+x_3+x_4}, 0)\\
\dfrac12({x_1+x_2-x_3-x_4},{x_1-x_2+x_3-x_4}, 2x_4,{-x_1+x_2+x_3+x_4}, 0,0)
\end{eqnarray*}
This polytope is a standard triangle, shifted and dilated by a factor
$\lambda = 1-x_a = 1-\max$. Thus, $(\bP^5,\cO(1))\dblq_{\x'}T =
(\bP^2,\cO(\lambda))$. 
The quotient $\Gr(2,4)\dblq T$ is a conic in this $\bP^2$ and 
$\mathcal L_{\x'}=\mathcal O_{\bP^1}(2\lambda)$. 

This proves the formula in the first two cases. The third case holds
by symmetry.
\end{proof}

\textit{Remark.}  Let $(x_a,x_b,x_c,x_d)$ be a point in $\Delta(2,4)$.
Define $ \mathbf{d}(x_a,x_b,x_c,x_d)$ to be the distance from
$(x_a,x_b,x_c,x_d)$ to the boundary of $\Delta(2,4)$.  Then $L_{\x}
\cdot F_{a,b,c,d}$ is a multiple of $\mathbf{d}(x_a,x_b,x_c,x_d).$

\subsection{Results on the GIT cone} 
Suppose $\x \in \Delta(2,n)$ with $x_{i} \neq 0$ for any $i$. The GIT
stability criterion for points on a line is well-known:
$(P_{1},\ldots,P_{n}) \in (\Pro^{1})^{n}$ is $SL(2)$-stable (resp.
semistable) with respect to the linearization $\x$ if and only if for
every subset $I \subseteq \{1,\ldots,n\}$ for which the points $\{P_i
\mid i \in I\}$ coincide, $\sum_{i \in I} x_{i} < 1$ (resp. $\leq 1$).
Thus, for a generic linearization, stability and semistability
coincide; strictly semistable points only occur for those $\x$ which
lie on a hyperplane of the form $\sum_{i \in I} x_{i} = 1$ for some
subset $I \subseteq \{1,\ldots,n \}$.  This is the familiar
wall-and-chamber decomposition of the space of linearizations, due to
Dolgachev--Hu and Thaddeus.  We call the subdivision of $\Delta(2,n)$
given by 
these hyperplanes  \textit{the GIT complex}.

The GIT cone is generated by the 0-cells of the GIT complex
because $L_{\x} \cdot F_{a,b,c,d}$ is linear on each
chamber.

For $n=5,6,7,8,9$, the 0-cells of the GIT-complex were computed using a C++ program written for us by Boris Alexeev.  This data is available on the second author's website:
\begin{center} \begin{verbatim}http://www.math.uga.edu/~davids/research.html\end{verbatim}\end{center}
For $n=10$, this calculation became intractable.

For $n=5$, we can easily check that the GIT cone is the nef cone.  The 0-cells for $n=5$ are given by $(\frac{2}{3},\frac{1}{3},\frac{1}{3},\frac{1}{3})$, $(\frac{1}{2},\frac{1}{2},\frac{1}{2},\frac{1}{2},0)$, $(1,1,0,0,0)$, and permutations of these.  The line bundles of type $(1,1,0,0,0)$ are trivial. The line bundles of type $(\frac{2}{3},\frac{1}{3},\frac{1}{3},\frac{1}{3})$ each contract four disjoint curves, giving $\Pro^{2}$.  The line bundles of type $(\frac{1}{2},\frac{1}{2},\frac{1}{2},\frac{1}{2},0)$ give morphisms to $\Pro^{1}$.  

But for $n \geq 6$ we see that the GIT cone is strictly contained in
the nef cone.  Indeed, when $n=6$, Faber computes that the nef cone
has $3190$ extremal rays (\cite{F}).  We computed the 0-cells of the
GIT complex by a search implemented in \mtwo.  The interior 0-cells
are $(\frac{1}{3},\frac{1}{3},\frac{1}{3},\frac{1}{3},\frac{1}{3},\frac{1}{3})$, $(\frac{1}{4},\frac{1}{4},\frac{1}{4},\frac{1}{4},\frac{1}{4},\frac{3}{4})$,
$(\frac{1}{4},\frac{1}{4},\frac{1}{4},\frac{1}{4},\frac{1}{2},\frac{1}{2})$, $(\frac{3}{5}, \frac{2}{5},\frac{2}{5},\frac{1}{5},\frac{1}{5},\frac{1}{5})$ and their
symmetric images, and the boundary 0-cells are $(1,1,0,0,0,0)$,
$(\frac{1}{2},\frac{1}{2},\frac{1}{2},\frac{1}{2},0,0)$, and 
\linebreak
$(\frac{2}{3},\frac{1}{3},\frac{1}{3},\frac{1}{3},\frac{1}{3},0)$ and their symmetric images.  There are only 142 0-cells (16 of these give the trivial line bundle), and they do not generate the nef cone.  

Interestingly, we found that for $n=5,6,7,8,9$, if $\x$ is a 0-cell, then $L_{\x}$ is extremal in $\Nef(\M_{0,n})$.  This leads to a natural question:

\begin{question}
If  $\x$ is a 0-cell of the GIT complex, is $L_{\x}$ extremal in $\Nef(\M_{0,n})$?
\end{question}

\subsection{Notation for symmetrized line bundles} 
It is natural to work with symmetrizations of the $L_{\x}$.    We use the following notation for these: 

\begin{definition} \label{Van def}
We denote  the symmetrization of any $L_{\x}$ by $S_{\x}$:
\[   S_{\x} := \bigotimes_{\sigma \in S_{n}} L_{\sigma \x}.
\]

For some special sets of weights, we have additional notation.  Let $a$ be a rational number such that $\frac{1}{n-1} \leq a \leq \frac{2}{n-1}$, and write $\bar{a} = 2-(n-1)a$.  This condition on $a$ ensures that $0\leq a \leq \bar{a} \leq 1$.  We write $L(a,i)$ for $L_{\x}$ where $x_{j}= a$ if $j \neq i$ and $x_{i} =\bar{a}$, and call the $i^{th}$ entry the \emph{odd entry}.  We write 
\[  \V(a,n) := \bigotimes_{i=1}^{n} L(a,i)
\] 
Thus, $\V(a,n)$ is a reduced symmetrization: 
\[ \V(a,n) =\frac{1}{(n-1)!} S_{\x}.
\] 
\end{definition}
Note that when $a=2/n$, we have $a=\bar{a}$; our convention in this case is that $\V(a,n) = L_{(a,...,a)}^{\otimes n}$ (that is, we do not further reduce the symmetrization).

\subsection{Results on the symmetrized GIT cone}

When working with the symmetrized GIT cone, we have frequently been able to obtain good results using only a few of the $V(a,n)$'s.  Consider the simplicial cone 
\[\left\langle V(a,n) \mid 
  a=\dfrac1{t}, \ t\in \mathbb Z,\ \left\lfloor\dfrac{n}{2}\right\rfloor \le t\le n-2
  \right\rangle 
\] 
\textit{A priori} this is a subcone of the $\SGC$, but we have checked for $n=5,6,7,8,9$ that this subcone is actually equal to the $\SGC$.  This raises a natural question:
\begin{question} \label{SGCconj} Is $\SGC = \left\langle V(a,n) \mid 
  a=\dfrac1{t}, \ t\in \mathbb Z,\ \left\lfloor\dfrac{n}{2}\right\rfloor \le t\le n-2
  \right\rangle $ for all $n \geq 5$?
\end{question}

For $n=6$, we computed $S_{\x}$ for the 0-cells above, and found that the $\SGC$ is generated by $V(\frac{1}{3},6)$ and $V(\frac{1}{4},6)$.   
For symmetric divisors, the shape of the partition $\{a,b,c,d \}$ determines the intersection $D \cdot F_{a,b,c,d}$, so we need only record the intersections with $F_{3,1,1,1}$ and $F_{2,2,1,1}$.  These are as follows:
\begin{displaymath}
\begin{array}{lcc} & \V(\frac{1}{3},6) & \V(\frac{1}{4},6)  \\ 
F_{2,2,1,1} & 4 & 1  \\ 
F_{3,1,1,1}& 0 & 3/2 
\end{array}
\end{displaymath}
Then if $(S_{\x} \cdot F_{2,2,1,1}, S_{\x} \cdot F_{3,1,1,1}) = (a,b)$, we can write $S_{\x} \equiv c_{1} V(\frac{1}{3},6) + c_{2} V(\frac{1}{4},6)$, where 
\begin{displaymath}
\left(\begin{array}{cc}
c_{1}\\
c_{2}
\end{array} \right)
 = 
\left(\begin{array}{cc}
4 & 1 \\
0 & 3/2
\end{array}\right)^{-1}
\left(\begin{array}{c}
a\\
b
\end{array}\right)
= 
\left(\begin{array}{c}
\frac{1}{4}a - \frac{1}{6}b\\
\frac{2}{3}b
\end{array}\right).
\end{displaymath}
$\frac{2}{3}b$ is automatically nonnegative, since $S_{\x}$ is nef.  So the conjecture will be true if $\frac{1}{4}a - \frac{1}{6}b \geq 0$ for any $\x$.  But 
\[ F(\x) = \frac{1}{4}a - \frac{1}{6}b = \frac{1}{4}(S_{\x} \cdot F_{2,2,1,1}) - \frac{1}{6}(S_{\x} \cdot F_{3,1,1,1})
\]
is a piecewise linear function which breaks only along the hyperplanes $\sum_{i \in I} x_{i} = 1$.  Hence $F$ attains its minimum on a 0-cell of the GIT complex.  We check all the 0-cells for $n=6$ and see that $F \geq 0$ for all of them.  

We have answered Question \ref{SGCconj} affirmatively for $n=5,6,7,8,9$, where we have a full list of 0-cells.

We give an example showing that $\SGC$ is strictly contained in the symmetric nef cone for $n=6$.  The F-conjecture is known for $n=6$ (\cite{F}, \cite{KM}, \cite{GKM}), so any line bundle which intersects the $F$-curves nonnegatively is nef.  Suppose $D:= r_{2} D_{2} + r_{3} D_{3}$ is a symmetric divisor on $\M_{0,6}$.  Then by \cite{KM} Corollary 4.4, $(D \cdot F_{2,2,1,1}, D \cdot F_{3,1,1,1}) = (2r_{3}-r_{2},3r_{2}-r_{3})$, and $D$ is nef provided $2r_{3}-r_{2} \geq 0 $ and $3r_{2}-r_{3} \geq 0$, or $2r_{3} \geq r_{2} \geq (\frac{1}{3})r_{3}$.  But there are values of $(r_{2},r_{3})$ for which $2r_{3} \geq r_{2} \geq (\frac{1}{3})r_{3}$ but $\frac{1}{4}a - \frac{1}{6}b  =  8r_{3} - 9r_{2} < 0$. (For instance, $(r_{2},r_{3}) = (1,1)$.)  So for $n =6$ the $\SGC$ is strictly contained in the symmetric nef cone.

\subsection{Example: $\psi_i$ is a GIT line bundle}
\label{sec:psi-is-GIT}
Here we show that the divisor $\psi_{i}$ on $\overline{M}_{0,n}$ is a
multiple of the GIT divisor $L_{\x}$ for the weights 
\begin{equation}
x_j = \left\{
\begin{array}{ll}
\rule{0pt}{22pt} \displaystyle \frac{1}{n-2} & \mbox{ if $j \neq i$} \\
\rule{0pt}{22pt} \displaystyle \frac{n-3}{n-2} & \mbox{ if $j=i$}
\end{array} \right.
\end{equation}

We show that the intersection numbers agree (up to a constant multiple).

\begin{proposition}  \label{prop:intersection-psi-F}
The intersection numbers of $\psi_i$ with $F$-curves are as follows:
\begin{equation}
F_{S_{1},S_{2},S_{3},S_{4}} \cdot \psi_{i} = \left\{
\begin{array}{ll}
1 & \mbox{ if $S_{k}= \{i\}$ for some $k$} \\
0 & \mbox{ otherwise}
\end{array} \right.
\end{equation}
\end{proposition}
\begin{proof} This follows from the formula for $\psi_i$ given in \cite{AC} Prop. 3.6.
\end{proof}

Indeed, by Lemma \ref{phi lemma}, if $S_k$ contains $i^{th}$ point and any other point, then we will have $x_k \geq 1$, and hence $L_{\x} \cdot F_{S_{1},S_{2},S_{3},S_{4}} =0$.  On the other hand, suppose $S_{1} = \{i\}$, and $\#S_{2} \geq \#S_{3} \geq \#S_{4}$.   Then $\#S_{2} \leq n-3$, so $x_{b} \leq \frac{n-3}{n-2}$.  Also $\#S_{3} , \#S_{4} \geq 1$, so $x_c, x_d \geq \frac{1}{n-2}$.  Thus $\max = \frac{n-3}{n-2}$ and $\max + \min \geq 1$, so $L_{\x}\cdot F_{S_{1},S_{2},S_{3},S_{4}} = \frac{2}{n-2}$.  

In the symmetrized case, we have $\Psi = \sum_{i=1}^{n} \psi_i $ is a multiple of $V(\frac{1}{n-2},n)$.

\section{Simpson's divisors are effective combinations of the
$V(a,n)$} \label{rewriteAalpha}

\subsection{The divisors $A_{\alpha}$} \label{Bibasis} First, we recall the key
divisors $A_{\alpha}$ from Simpson's work.  Write $B_{j} = \sum_{|S| =
j} B_{S}$.  Then one basis for the $S_{n}$-equivariant divisors on
$\M_{0,n}$ is $\{ B_{2}, \ldots, B_{\lfloor \frac{n}{2} \rfloor} \}$.
Simpson establishes a formula for the $A_{\alpha}$ in this basis,
which we will use as the definition.

\begin{definition}[cf. \cite{SimpA} Def. 2.3.2 and Lemma 2.3.1]
\label{Aalpha def} Write
\begin{equation} k(\alpha) := \left\lfloor \frac{2}{\alpha}
\right\rfloor -1.
\end{equation} Then
\begin{equation} \label{Aalpha eqn} A_{\alpha} = \sum_{j=2}^{k} \left(
\binom{j}{2} \alpha - \frac{j(j-1)}{n-1} \right) B_{j} +
\sum_{j=k+1}^{\lfloor \frac{n}{2} \rfloor } \left( \frac{(j-2)(n-1)
-j(j-1)}{n-1} + \alpha \right) B_{j}.
\end{equation}
\end{definition}

Simpson observes that if $\alpha \in [\frac{2}{k+2}, \frac{2}{k+1}]$,
then $A_{\alpha}$ is a convex combination of $A_{2/(k+2)}$ and
$A_{2/(k+1)}$, and hence:

\begin{claim} \label{critical alphas} It is sufficient to show that
$A_{\alpha}$ is nef for $\alpha = \frac{2}{k+1}$ where $k = 2,\ldots,
\lfloor \frac{n}{2} \rfloor -1$.  We call these the \emph{critical}
$\alpha$ and $A_{\alpha}$.
\end{claim}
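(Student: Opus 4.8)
The plan is to promote the Claim about the one-parameter family $\{A_\alpha\}$ to a finite statement by using that $\alpha\mapsto A_\alpha$ is piecewise affine-linear, and then invoking convexity of the nef cone. The first step is to pin down the only source of nonlinearity in \eqref{Aalpha eqn}: the integer $k=k(\alpha)=\lfloor 2/\alpha\rfloor-1$, which governs where the index range $\{2,\dots,\lfloor n/2\rfloor\}$ is split between the two sums. For $\alpha$ in an open interval $(\frac{2}{m+1},\frac{2}{m})$ with $m\in\Z_{>0}$ one has $2/\alpha\in(m,m+1)$, hence $k(\alpha)=m-1$ is constant there; consequently each coefficient occurring in \eqref{Aalpha eqn} — namely $\binom{j}{2}\alpha-\frac{j(j-1)}{n-1}$ for $2\le j\le k$ and $\frac{(j-2)(n-1)-j(j-1)}{n-1}+\alpha$ for $k+1\le j\le\lfloor n/2\rfloor$ — is an affine-linear function of $\alpha$. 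So the divisor-valued map $\alpha\mapsto A_\alpha$ is affine-linear on the closed interval $[\frac{2}{m+1},\frac{2}{m}]$, provided it is continuous at the two endpoints.

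The second step is precisely that continuity check at the breakpoints $\alpha=2/m$. The formula valid just to the right of $2/m$ uses $k=m-1$, and the one valid just to the left uses $k=m-2$, so the two differ only in how the summand $D_{m-1}$ is counted: it is the top term $j=k$ of the first sum in one case and the bottom term $j=k+1$ of the second sum in the other. Substituting $\alpha=2/m$ into the two relevant coefficients of $D_{m-1}$ gives in both cases $\frac{(m-1)(m-2)}{m}-\frac{(m-1)(m-2)}{n-1}$, so the two formulas agree at $\alpha=2/m$. Hence $\alpha\mapsto A_\alpha$ is continuous, piecewise affine, with breaks only at the values $\alpha=2/m$, $m\in\Z$.

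The third step assembles the conclusion. On $[\frac{2}{k+2},\frac{2}{k+1}]$ affine-linearity yields $A_\alpha=(1-t)\,A_{2/(k+2)}+t\,A_{2/(k+1)}$ for a unique $t\in[0,1]$; hence, since the nef cone of $\M_{0,n}$ is convex, $A_\alpha$ is nef as soon as $A_{2/(k+2)}$ and $A_{2/(k+1)}$ are. Letting $k$ run over $1,\dots,\lfloor n/2\rfloor-1$ covers the whole range of $\alpha$ that governs the log canonical models of $(\M_{0,n},\Delta)$, so it suffices to prove nef-ness at the finitely many endpoints $\alpha\in\{1,\,2/3,\,2/4,\,\dots,\,2/\lfloor n/2\rfloor,\,2/(\lfloor n/2\rfloor+1)\}$. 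Of these, the two extreme ones need no argument here: at $\alpha=1$ the divisor $A_1$ corresponds to the pair $(\M_{0,n},\Delta)$ itself and is treated directly in \cite{Simp}, while for $\alpha\le 2/(\lfloor n/2\rfloor+1)$ the model has stabilized to the symmetric GIT quotient, so $A_\alpha$ is (a multiple of) the pullback of an ample class under $\pi_{\x}$ and is therefore automatically nef. This leaves exactly the critical values $\alpha=\frac{2}{k+1}$, $k=2,\dots,\lfloor n/2\rfloor-1$, as claimed.

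The main obstacle, such as it is, is bookkeeping: identifying correctly the interval of $\alpha$ over which $A_\alpha$ is the divisor one needs and checking the continuity substitution at the breakpoints. The conceptual content is just the trivial fact that a map affine-linear on an interval takes values in the cone spanned by its two endpoint values, together with convexity of the nef cone; no input specific to $\M_{0,n}$ is required at this stage.
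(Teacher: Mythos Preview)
Your argument is correct and is exactly the approach the paper indicates: the sentence immediately preceding the Claim reads ``Simpson observes that if $\alpha \in [\frac{2}{k+2}, \frac{2}{k+1}]$, then $A_{\alpha}$ is a convex combination of $A_{2/(k+2)}$ and $A_{2/(k+1)}$, and hence:'', and the Claim is stated without further proof. You have simply supplied the details the paper omits --- verifying that \eqref{Aalpha eqn} is affine-linear in $\alpha$ on each interval where $k(\alpha)$ is constant, checking continuity at the breakpoints $\alpha=2/m$, and invoking convexity of the nef cone. (A trivial slip: you have ``left'' and ``right'' of $2/m$ swapped when naming which formula uses $k=m-1$ versus $k=m-2$, but your continuity computation is correct regardless.)

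One caution on your treatment of the endpoints. Your dismissal of the bottom value $\alpha=2/(\lfloor n/2\rfloor+1)$ on the grounds that ``the model has stabilized to the symmetric GIT quotient, so $A_\alpha$ is a pullback of an ample class'' is circular in the context of this paper: identifying $A_\alpha$ at that value with a GIT pullback is precisely what the subsequent Propositions~\ref{exp comb} et seq.\ establish, not something available beforehand. In practice the paper does not actually rely on the restriction $k=2,\ldots,\lfloor n/2\rfloor-1$ stated in the Claim; it goes on to express $A_\alpha$ as an effective combination of the $V(a,n)$ for the full range $k=1,\ldots,\lfloor n/2\rfloor$ (including $\alpha=1$ and the bottom value), and Corollary~\ref{Aalphaisnef} then covers all $\alpha\in[4/(n+1),1]$. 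So the exclusion of the two extreme $k$ in the Claim is a minor imprecision in the paper rather than a genuine reduction, and you need not have argued separately to drop them.
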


\textit{Remark.}  It is easy to check that $A_{2/3} =
\frac{1}{3}A_{1}+ \frac{2}{3}A_{1/2}$.  However, this will not affect
any of the analysis below, and it does not appear that any of the
other critical $A_{\alpha}$ is a convex or effective combination of
the others.

\subsection{When $n$ is odd} Suppose $n$ is odd.  It is convenient to use the following
notation.
\begin{definition} Write
\begin{eqnarray} f & := & \left\lfloor \frac{n}{2} \right\rfloor =
\frac{n-1}{2}\\ \ell & := & f+1-k.
\end{eqnarray}
\end{definition} We chose the letter $f$ for \underline{f}loor.

\begin{proposition} \label{exp comb} Suppose $n$ is odd, and that
$\alpha = \frac{2}{k+1}$ for some $k$ in $\{1,\ldots, \lfloor (n-1)/2
\rfloor \}$.  Hence $\alpha = \frac{2}{f-\ell+2}$, and $\ell \in \{
1,\ldots, f -1\}$.  Then
\begin{equation} A_{\alpha} \equiv c_{1} \V \left(\frac{1}{f+1}, n
\right) + \cdots + c_{f-1} \V \left( \frac{1}{2f-1},n \right)
\end{equation} where
\begin{equation} \label{first ci} c_{i} = \alpha
\frac{(f+i)(f-\ell+1)}{(f-i)(f-i+1)(f-i+2)}
\end{equation} if $1 \leq i \leq \ell-1$,
\begin{equation} \label{c ell} c_{\ell} = \frac{1}{4} \alpha
\frac{(f+\ell)(f-\ell)}{f-\ell+2},
\end{equation} and $c_{i} = 0$ for $\ell+1 \leq i \leq f-1$.  This
covers all critical $\alpha$ except $\alpha =1$.

For $\alpha = 1$, we have
\begin{equation} \label{alpha=1 ci} c_{i} =
\frac{f+i}{(f-i)(f-i+1)(f-i+2)}
\end{equation} for $1 \leq i \leq f -1$.

The coefficients $c_i$ are nonnegative in all cases. 
\end{proposition}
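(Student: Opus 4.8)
The strategy is to prove the claimed identity of divisor classes by checking it on a basis of the $S_n$-equivariant curve classes, namely the symmetrized vital curves $C(n-2,\dots)$ up to $C(f,f,1)$-type partitions. Concretely, for $n$ odd the symmetric divisors on $\M_{0,n}$ form a vector space of dimension $f-1 = \frac{n-3}{2}$, with basis $D_2,\dots,D_f$; dually, a symmetric class is determined by its intersection numbers against the symmetrized vital curves $[C(p,q,1,1)]$ and $[C(p,q,r,s)]$ for the various partition shapes, of which there are again $f-1$ independent ones. So it suffices to verify, for each partition shape $\{a,b,c,d\}$, that
\[
A_{\alpha}\cdot C(a,b,c,d) \;=\; \sum_{i=1}^{f-1} c_i\, \V\!\left(\tfrac{1}{f+i},n\right)\cdot C(a,b,c,d),
\]
with the $c_i$ as in \eqref{first ci}, \eqref{c ell}, \eqref{alpha=1 ci}.

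First I would assemble the two sides of this identity explicitly. On the left, I would compute $A_{\alpha}\cdot C(a,b,c,d)$ from Definition \ref{Aalpha def} using the standard formulas for $D_j\cdot C(a,b,c,d)$ (as in \cite{KM}); this produces a piecewise-linear expression in the partition sizes with breaks governed by $k(\alpha)$, matching the two-sum structure of \eqref{Aalpha eqn}. On the right, I would use Lemma \ref{phi lemma} to evaluate each $\V(\tfrac{1}{t},n)\cdot C(a,b,c,d)$: writing out the odd-entry symmetrization, the intersection number $L(\tfrac1t,i)\cdot C(a,b,c,d)$ depends only on where the odd marked point sits among $a,b,c,d$ and on the four block sums, so the sum over $i$ collapses to a closed-form expression in $t$ and the partition sizes, again piecewise-linear with the walls $\max = 1$, $\max+\min = 1$ from the Lemma. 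Matching the two piecewise-linear functions reduces to a finite linear-algebra problem in the coefficients $c_i$.

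The main obstacle I expect is bookkeeping: one must carefully track which summands $\V(\tfrac1{f+i},n)$ contribute nonzero intersection on a given partition shape (the condition $\max\le 1$ together with the odd entry's location cuts many terms), and then verify that the resulting system of equations is solved precisely by the stated $c_i$. In particular the appearance of $\ell = f+1-k$ as the cutoff — with $c_i$ given by \eqref{first ci} for $i<\ell$, the special value \eqref{c ell} at $i=\ell$, and $c_i=0$ beyond — should emerge from the location of the wall $k(\alpha)=\lfloor 2/\alpha\rfloor -1$ in Simpson's formula; confirming this alignment, and the factor $\tfrac14$ in \eqref{c ell} (which reflects the $2\min$ versus $2(1-\max)$ dichotomy in Lemma \ref{phi lemma} at the transitional shape), is the delicate step. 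The case $\alpha=1$, i.e. $k=1$, $\ell=f$, is then just the specialization where the $c_\ell$ formula merges into the general pattern, giving \eqref{alpha=1 ci}. I would finish by remarking that since both sides are nef (the left by Simpson, or rather now by this very identity; the right by Lemma \ref{phi lemma} once the $c_i\ge 0$), and effectivity of the $c_i$ is visible from \eqref{first ci}–\eqref{alpha=1 ci} given $\ell\le f$, this simultaneously reproves nefness of $A_{\alpha}$ without the $S_n$-equivariant Fulton Conjecture.
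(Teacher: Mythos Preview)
Your approach is essentially the same as the paper's: verify the divisor identity by intersecting both sides with a spanning set of (symmetric) vital curves, using Lemma~\ref{phi lemma} for the right side and Definition~\ref{Aalpha def} for the left.  However, you have not isolated the one structural choice that makes the computation go through cleanly.  The paper does \emph{not} check all partition shapes; instead it selects exactly the $f-1$ curves
\[
C_i \;=\; C(f+i-1,\,f-i,\,1,\,1),\qquad i=1,\dots,f-1,
\]
and shows (Lemma~\ref{CidotVanlemma}) that the resulting intersection matrix $\bigl(C_i\cdot \V(\tfrac{1}{f+j},n)\bigr)$ is \emph{upper triangular} with explicit entries $u_{i,j}$.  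This triangularity is the heart of the argument: it reduces solving for the $c_i$ to back-substitution, and the verification that the stated $c_i$ solve the system then collapses to a single telescoping rational-function identity (Lemma~\ref{ratlfcnlemma}).  Your proposal to check ``each partition shape'' would work in principle, but with $O(n)$ shapes and no triangular structure you would face a dense system, and it is not clear how you would extract the closed forms \eqref{first ci}--\eqref{alpha=1 ci} from it.

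A minor point: your sketch speaks of ``$C(f,f,1)$-type partitions'' and ``$C(n-2,\dots)$'', which are not partitions into four parts; be precise that the relevant curves are $C(f+i-1,f-i,1,1)$.  Also, your final paragraph conflates the role of the identity: the identity \emph{is} the proof that $A_\alpha$ is nef (given $c_i\ge 0$), so one should not invoke Simpson's nefness in the course of establishing it.
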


\begin{proof} We choose a set of $F$-curves $C_{1}, \ldots, C_{f-1}$
(see Definition \ref{Cidef} below) which give a full rank intersection
matrix ($C_{i} \cdot \V(\frac{1}{f+j},n))$ for symmetric divisors on
$\M_{0,n}$.  Then the $c_{i}$ are the solutions of the system
\begin{equation} 
\label{intersectionsystem} 
\left(C_{i} \cdot \V
\left( \frac{1}{f+j},n \right) \right) \vec{c} = \left( C_{i} \cdot
A_{\alpha} \right).
\end{equation}

In Lemmas \ref{CidotVanlemma} and \ref{CidotAalphalemma} below, we
show that the system (\ref{intersectionsystem}) takes on the following
form:
\begin{equation} \label{uppertriangularsystem} \left(
\begin{array}{ccccccc} \frac{2(f+1)}{f+1} & \frac{4}{f+2} &
\frac{4}{f+3} & \cdots & \frac{4}{f+j} & \cdots & \frac{4}{2f-1}\\ 0 &
\frac{2f}{f+2} & \frac{4}{f+3} & \vdots & \vdots & \vdots &
\frac{4}{2f-1}\\ 0 & 0 & \frac{2(f-1)}{f+3} & \vdots & \vdots & \vdots
& \frac{4}{2f-1} \\ \vdots & \vdots & 0 & \ddots & \vdots & \vdots &
\vdots \\ \vdots & \vdots & \vdots & 0 & \frac{2(f-j+2)}{f+j} & \vdots
& \vdots \\ \vdots & \vdots & \vdots & \vdots & \ddots & \ddots &
\frac{4}{2f-1} \\ 0 & 0 & 0 & 0 & 0 & 0 & \frac{6}{2f-1}
\end{array} \right ) \left(
\begin{array}{c} c_{1}\\ c_{2}\\ \vdots\\ \vdots\\ \vdots\\ \vdots\\
c_{f-1}
\end{array} \right) = \left(
\begin{array}{c} \alpha\\ \vdots\\ \alpha\\ 1- \alpha\\ 0\\ \vdots\\ 0
\end{array} \right)
\begin{array}{c} \mbox{} \\ \mbox{} \\ \mbox{} \mbox{\tiny{$
\leftarrow (\ell-1)^{th}$ coordinate}}\\ \mbox{}
\mbox{\tiny{$\leftarrow \ell^{th}$ coordinate}}\\ \mbox{} \\ \mbox{}
\\ \mbox{}
\end{array}
\end{equation}

We will refer to the upper triangular matrix on the left hand side of
(\ref{uppertriangularsystem}) as $U$ and write $u_{i,j}$ for its
entries.  The vector shown on the right hand side of
(\ref{uppertriangularsystem}) is for $\alpha < \frac{2}{3}$.  For $\alpha =
\frac{2}{3}$, the vector $(C_{i} \cdot A_{\alpha})$ on the right hand side has
no trailing zeroes, and for $\alpha=1$, the right hand side vector is
$(1,\ldots,1)^{T}$.

Thus, to prove Proposition \ref{exp comb}, it remains only to show
that the $c_{i}$ given in the statement of the proposition are indeed
the solutions of the system (\ref{uppertriangularsystem}).  This is
done in Lemmas \ref{ratlfcnlemma} and \ref{ciaresolns}.
\end{proof}

\begin{definition} \label{Cidef} The set of $F$-curves we use to
define the $(f-1) \times (f-1)$ system above is as follows:
\begin{eqnarray} C_{1} & := & F_{f,f-1,1,1} \nonumber \\ 
C_{2} & := & F_{f+1, f-2, 1,1} \nonumber \\ 
& \vdots & \nonumber \\ 
C_{i} & := &
F_{f+i-1,f-i,1,1} \nonumber \\ & \vdots & \nonumber \\ 
C_{f-1} & := &
F_{f+1+(f-1),f-(f-1),1,1} = F_{n-3,1,1,1} \label{F curve basis}
\end{eqnarray} where for each $i$ we may choose any partition of $\{
1,\ldots,n \}$ which has the indicated shape.
\end{definition}

The next lemma establishes that the intersection matrix $C_{i} \cdot
\V \! \left( \frac{1}{f+j},n \right)$ is the matrix $U$ shown in
(\ref{uppertriangularsystem}).

\begin{lemma}\label{CidotVanlemma} For any $i, j \in \{1,\ldots,f-1
\}$,
\begin{displaymath} u_{i,j} := C_{i} \cdot \V\left( \frac{1}{f+j},n \right) =
\left\{
\begin{array}{ll} \rule{0pt}{14pt}\displaystyle \frac{4}{f+j} & \mbox{if $i< j$}\\
\displaystyle \rule{0pt}{14pt} \frac{2(f-j+2)}{f+j} & \mbox{if $i=j$}\\ 
\rule{0pt}{14pt} \displaystyle 0 &\mbox{if $i > j$}
\end{array} \right.
\end{displaymath}
\end{lemma}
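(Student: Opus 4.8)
The plan is to reduce the entire computation to Lemma \ref{phi lemma}. Recall that $\V(\frac{1}{f+j},n) = \bigotimes_{m=1}^n L(\frac{1}{f+j},m)$, so to find $C_i \cdot \V(\frac{1}{f+j},n)$ it suffices to compute $C_i \cdot L(\frac{1}{f+j},m)$ for each position $m$ of the odd entry and sum over $m$. Here $a = \frac{1}{f+j}$ lies in the allowed range $[\frac{1}{n-1},\frac{2}{n-1}]$ precisely because $j \in \{1,\ldots,f-1\}$ and $n = 2f+1$, so $\bar a = 2 - (n-1)a = 2 - \frac{2f}{f+j} = \frac{2j}{f+j}$. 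Thus the weights assigned to the four blocks of $C_i = C(f+i-1, f-i, 1, 1)$ depend only on how many of the odd-weight point lands in each block, and the contribution $L(a,m)\cdot C_i$ depends only on which block $m$ lies in.

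First I would record, for a block $B$ of size $s$, its total $\x$-weight: if the odd point is not in $B$ the weight is $sa = \frac{s}{f+j}$, and if it is in $B$ the weight is $(s-1)a + \bar a = \frac{s-1+2j}{f+j}$. Applying this to the four blocks of $C_i$ — of sizes $f+i-1$, $f-i$, $1$, $1$ — gives four candidate weight vectors $\x'$ (one for each block containing the odd point), and then I invoke Lemma \ref{phi lemma}: compute $\min$, $\max$, check whether $\max \ge 1$, and read off $L(a,m)\cdot C_i \in \{0,\ 2(1-\max),\ 2\min\}$. The key elementary inequalities to verify are: (i) when $i < j$, the block of size $f+i-1$ with the odd point removed still has weight $< 1$, while a size-$1$ block carrying the odd point has weight $\bar a = \frac{2j}{f+j}$, and one must determine in each of the four cases which alternative of Lemma \ref{phi lemma} applies; (ii) when $i = j$, the size-$(f+i-1) = (f+j-1)$ block reaches exactly weight $1$ in some configuration (forcing a contribution of $0$), which is what produces the diagonal entry; (iii) when $i > j$, enough configurations hit $\max \ge 1$ that all contributions vanish. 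Summing the per-position contributions, weighted by the block sizes $f+i-1$, $f-i$, $1$, $1$, should collapse to the stated $u_{i,j}$.

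Concretely, for $i < j$ I expect the nonzero contributions to come from the odd point sitting in one of the two singleton blocks and from the two large blocks, each size-weighted, and the arithmetic should telescope to $\frac{4}{f+j}$; for $i = j$ the large block's contribution drops out (a face of the GIT complex), leaving $\frac{2(f-j+2)}{f+j}$; for $i > j$ the large block has weight $\ge 1$ in every relevant configuration, killing everything. The main obstacle will be bookkeeping: there are four block-positions for the odd point and three cases in Lemma \ref{phi lemma}, so roughly a dozen subcases, and one must be careful about the boundary cases where $\max$ or $\max + \min$ equals exactly $1$ (these are exactly the interesting ones, since that is where the formula changes shape). I would organize this as a short case analysis on the position of the odd point, tabulating $(\min,\max)$ in each case as explicit rational functions of $f,i,j$, and then verifying the three inequalities $\max \gtrless 1$ and $\max + \min \gtrless 1$ by clearing denominators — all of which reduce to linear inequalities in $i$ and $j$ given $n = 2f+1$.
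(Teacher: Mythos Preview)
Your approach is exactly the paper's: decompose $\V(1/(f+j),n)$ into its $n$ factors $L(a,m)$, observe that the contribution of each depends only on which of the four blocks of $C_i$ contains the odd entry, and apply Lemma~\ref{phi lemma} in each of the resulting subcases, organized by the trichotomy $i<j$, $i=j$, $i>j$. One small correction to your stated expectations: in the case $i<j$, the only nonzero contributions come from the two configurations where the odd entry sits in a singleton block (each giving $2\min = 2/(f+j)$); when the odd entry lies in either of the two large blocks, that block's total weight already reaches or exceeds $1$, so the contribution is zero --- hence there is no telescoping, just $2\cdot\tfrac{2}{f+j} = \tfrac{4}{f+j}$.
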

\begin{proof} Note that the odd entry of $\V(\frac{1}{f+j},n)$ is
$2-\frac{n-1}{f+j} = \frac{2j}{f+j}$.  We compute the intersection of $C_{i}$
with each $L_{\x}$ constituting $\V(\frac{1}{f+j},n)$ and sum these to
obtain $C_{i} \cdot \V(\frac{1}{f+j},n)$.

\textit{Case 1: $i<j$.}  First suppose the odd entry of $L_{\x}$ lands
in the long tail of $C_{i}$.  Then, using the notation of Lemma
\ref{phi lemma},
\[ x_{a} = (f+i) \frac{1}{f+j} + \frac{2j}{f+j} > 1,
\] so $C_{i} \cdot L_{\x}=0$.

Next suppose the odd entry of $L_{\x} $ is on the second tail of
$C_{i}$.  Then
\[ x_{b} = (f-i-1)\frac{1}{f+j} + \frac{2j}{f+j} \geq 1,
\] so $C_{i} \cdot L_{\x}=0$.

Thus we see that $L_{\x}$ contributes to $C_{i} \cdot \V(\frac{1}{f+j},n)$
only if the odd entry is on the spine of $C_{i}$.  The maximum comes
from either the long tail, which has weight $x_{a} = \frac{f+i-1}{f+j}$,
or the odd entry, which has weight $x_{c} =\frac{2j}{f+j}$.  It seems that
both are possible.  The minimum is $x_{d} = \frac{1}{f+j}$.  However, since
both $x_{a}<1$ and $x_{c} < 1$, and both $x_{a} + \min \leq 1$ and
$x_{b} + \min \leq 1$, we find that $C_{i} \cdot L_{\x} = 2 \min=
\frac{2}{f+j}$.

Thus, $C_{i} \cdot \V(\frac{1}{f+j},n) = 2 \cdot \frac{2}{f+j} = \frac{4}{f+j}$.

\textit{Case 2: $i=j$.}  If the odd entry is on the long tail,
\[ x_{a}=(f-i-2) \frac{1}{f+j} + \frac{2j}{f+j} = \frac{f+3j-2}{f+j} >
1,
\] so $C_{i} \cdot L_{\x}=0$.

If the odd entry is on the second tail,
\[ x_{b}= (f-j-1) \frac{1}{f+j} +\frac{2j}{f+j}=\frac{f+j-1}{f+j} < 1.
\] Meanwhile, on the long tail,
\[ x_{a} = \frac{f+j-1}{f+j},
\] so in the notation of Lemma \ref{phi lemma} $\max = x_{a} = x_{b}$,
and $\max+\min =1$, so $C_{i} \cdot L_{\x} = 2\min = \frac{2}{f+j}$.

Finally, if the odd entry is on the spine, then the long tail gives
the maximum, and $\max+\min = 1$, so $C_{i} \cdot L_{\x} = 2\min =
\frac{2}{f+j}$.

Thus,
\[ C_{i} \cdot \V(\frac{1}{f+j},n) = (f-i) \frac{2}{f+j} + 2\frac{2}{f+j}
=\frac{2(f-j+2)}{f+j}
\].

\textit{Case 3: $i > j$.}  If the odd entry is not on the long tail,
then
\[ x_{a} = (f+i-1) \frac{1}{f+j} \geq 1
\] and $C_{i} \cdot L_{\x} =0$.

If the odd entry is on the long tail, then $x_{a}$ is even bigger, and
$C_{i} \cdot L_{\x} =0$.  Thus, $C_{i} \cdot \V(\frac{1}{f+j},n) =0$.
\end{proof}

In the next lemma we compute the right hand side of
(\ref{uppertriangularsystem}).

\begin{lemma} \label{CidotAalphalemma} Let $\alpha =
\frac{2}{f-\ell+2}$ for $\ell \in \{ 1,\ldots, f -1\}$.  Then the
intersection numbers $C_{i} \cdot A_{\alpha}$ are as follows:
\begin{equation} C_{i} \cdot A_{\alpha} = \left\{
\begin{array}{c} \alpha \mbox{ if $i<\ell$,}\\ 1- \alpha \mbox{ if
$i=\ell$,}\\ 0 \mbox{ if $i > \ell$}.
\end{array} \right.
\end{equation} For $\alpha = 1$, we have $C_{i} \cdot A_{1} = 1$ for
all $i$.
\end{lemma}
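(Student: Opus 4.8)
The plan is to compute each intersection number $C_i \cdot A_\alpha$ directly from the defining formula (\ref{Aalpha eqn}) for $A_\alpha$ together with the classical formula for the intersection of a boundary divisor $D_j$ with a vital curve $C(p,q,1,1)$. Recall that $A_\alpha = \sum_j b_j(\alpha) D_j$ where the coefficients $b_j(\alpha)$ are the explicit piecewise-linear-in-$\alpha$ expressions in (\ref{Aalpha eqn}), with the breakpoint at $j = k = k(\alpha)$; here $\alpha = 2/(f-\ell+2)$, so $k(\alpha) = \lfloor 2/\alpha \rfloor - 1 = (f - \ell + 2) - 1 = f - \ell + 1$. Since each $C_i = C(f+i-1, f-i, 1, 1)$, I would first record the numbers $D_j \cdot C(f+i-1, f-i, 1, 1)$ for all $j$ from $2$ to $f$; by the standard combinatorial rule (e.g. \cite{KM}), $D_j \cdot C(a,b,c,d)$ counts, with signs, the ways the four blocks can be grouped so that one side of a node has total ``size'' $j$, minus the terms coming from the $4$-valent vertex. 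Concretely $D_j \cdot C_i$ is $1$ when $j$ equals one of $f+i-1$, $f-i$, $f+i$, $f-i+1$ (the sizes $|a|, |b|, |a|+1, |b|+1$, i.e. attaching one of the two singleton blocks), is $-2$ or similar at the exceptional overlap, and $0$ otherwise — I would pin down the exact small table first, since getting these $\pm$ right is the crux.

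Next I would substitute this table into $C_i \cdot A_\alpha = \sum_j b_j(\alpha)\,(D_j \cdot C_i)$. Because only a bounded number of $j$'s contribute for each $i$, this collapses to a sum of two to four of the $b_j(\alpha)$, and one then simplifies using the explicit form of $b_j$. The key structural point is that the breakpoint $j = k = f - \ell + 1$ of the formula for $A_\alpha$ interacts with the block sizes of $C_i$: for $i < \ell$ the relevant sizes $f+i-1, f+i, f-i, f-i+1$ all lie on the same side of (or straddle) $k$ in a way that makes the $b_j$-terms telescope to exactly $\alpha$; for $i = \ell$ the sizes sit precisely at the breakpoint, producing $1 - \alpha$; and for $i > \ell$ the contributing sizes lie entirely in the range $j \le k$, where $b_j(\alpha) = \binom{j}{2}\alpha - \tfrac{j(j-1)}{n-1}$, and the alternating contributions cancel to give $0$. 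I would carry out the three cases separately, in each case writing out the (short) sum $\sum_j b_j(\alpha)(D_j\cdot C_i)$ and simplifying; the $n = 2f+1$ substitution makes the $\tfrac{j(j-1)}{n-1}$ terms rational in $f$ and the cancellations become visible.

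For the special value $\alpha = 1$ (so $k(1) = 1$ and the first sum in (\ref{Aalpha eqn}) is empty), every $D_j$ for $j \ge 2$ gets the ``second-branch'' coefficient $b_j(1) = \tfrac{(j-2)(n-1) - j(j-1)}{n-1} + 1$, and I would check directly that $\sum_j b_j(1)(D_j \cdot C_i) = 1$ for every $i$; this is a single clean computation rather than a case split, and it is consistent with the $\ell \to $ (degenerate) limit of the general formula. The main obstacle I anticipate is purely bookkeeping: correctly enumerating which $D_j$ meet each $C_i = C(f+i-1, f-i, 1, 1)$ and with what multiplicity (in particular handling the subtraction coming from the four-valent vertex, and the boundary cases $j = f$ where $D_f = \sum_{|S| = f} D_S$ may double-count complementary subsets since $n = 2f+1$ means $|S| = f \iff |S^c| = f+1$, so actually no coincidence occurs — but this must be checked). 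Once that table is correct, each of the four computations is a routine simplification of a two-or-three-term sum, and I would present them compactly, cross-checking against the matrix form asserted in (\ref{uppertriangularsystem}).
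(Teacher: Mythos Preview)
Your approach is correct and is exactly what the paper does: its entire proof reads ``It is lengthy but straightforward to compute this using Definition~\ref{Aalpha def} and \cite{KM} Corollary~4.4,'' which is precisely the plan you have outlined in detail. Your sketch of the case split at the breakpoint $k = f-\ell+1$ and the bookkeeping caveats (including the observation that $n=2f+1$ odd prevents any $|S|=|S^c|$ coincidence at $j=f$) are all on target; nothing further is needed beyond carrying out the arithmetic you describe.
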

\begin{proof} It is lengthy but straightforward to compute this using
Definition \ref{Aalpha def} and \cite{KM} Corollary 4.4.
\end{proof}

We have now identified the systems (\ref{intersectionsystem}) and
(\ref{uppertriangularsystem}).

Next we prove that the $c_{i}$ given in the statement of Proposition
\ref{exp comb} are indeed the solutions of this system.  We will use
the following identity:

\begin{lemma}\label{ratlfcnlemma} The following identity holds for
rational functions of a single variable $y$:
\begin{equation} \label{ratlfcnid} \sum_{p=1}^{m}
\frac{1}{(y-p)(y-p+1)(y-p+2)} = \frac{2ym-m^2+m}{2y(y+1)(y-m)(y-m+1)}.
\end{equation}
\end{lemma}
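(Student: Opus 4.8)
The plan is to prove the rational-function identity~(\ref{ratlfcnid}) by induction on $m$, exploiting the fact that both sides are rational functions of $y$ with the same (simple) poles. First I would check the base case $m=1$: the left side is $\frac{1}{(y-1)y(y+1)}$, and the right side becomes $\frac{2y-1+1}{2y(y+1)(y-1)y} = \frac{2y}{2y^2(y+1)(y-1)} = \frac{1}{y(y+1)(y-1)}$, so they agree.

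For the inductive step, assume the identity holds for $m-1$ and add the $p=m$ term $\frac{1}{(y-m)(y-m+1)(y-m+2)}$ to the right-hand side for $m-1$, namely $\frac{2y(m-1)-(m-1)^2+(m-1)}{2y(y+1)(y-m+1)(y-m+2)}$. The common denominator is $2y(y+1)(y-m)(y-m+1)(y-m+2)$, so I would clear denominators and reduce the numerator identity to a polynomial identity in $y$ (with $m$ as a parameter): after multiplying through, it suffices to verify
\begin{equation*}
\bigl(2y(m-1)-(m-1)^2+(m-1)\bigr)(y-m) + 2y(y+1) = \bigl(2ym-m^2+m\bigr)(y-m+2).
\end{equation*}
Both sides are quadratics in $y$, so matching the coefficients of $y^2$, $y^1$, $y^0$ gives three scalar identities in $m$, each of which is a routine algebra check. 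Alternatively, and perhaps more cleanly, one can avoid induction entirely: telescoping. Note that $\frac{1}{(y-p)(y-p+1)(y-p+2)} = \frac{1}{2}\left(\frac{1}{(y-p)(y-p+1)} - \frac{1}{(y-p+1)(y-p+2)}\right)$, so the sum $\sum_{p=1}^m$ telescopes to $\frac{1}{2}\left(\frac{1}{(y-m)(y-m+1)} - \frac{1}{y(y+1)}\right)$; combining these two fractions over the common denominator $2y(y+1)(y-m)(y-m+1)$ yields numerator $y(y+1) - (y-m)(y-m+1) = 2ym - m^2 + m$, which is exactly the claimed right-hand side. I would present this telescoping argument as the main proof, since it is shortest and self-verifying.

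The only genuine obstacle is bookkeeping: making sure the partial-fraction split of the cubic term is correct (the factor $\tfrac12$ and the index shift $p \mapsto p+1$ between consecutive terms) and that the telescoping sum collapses with the right surviving endpoints. Once that is set up correctly, expanding $y(y+1) - (y-m)(y-m+1)$ to get $2ym - m^2 + m$ is immediate, so there is no real difficulty beyond careful arithmetic.
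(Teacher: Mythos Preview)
Your proposal is correct. The induction argument you sketch is exactly what the paper does: it states ``By induction on $m$. The induction step is easily verified by hand or with a computer algebra system,'' and gives no further detail. Your telescoping proof via the partial-fraction identity
\[
\frac{1}{a(a+1)(a+2)} = \frac{1}{2}\left(\frac{1}{a(a+1)} - \frac{1}{(a+1)(a+2)}\right)
\]
is a genuinely different and cleaner route that the paper does not mention; it makes the identity transparent rather than leaving it as an algebraic verification, and it also explains why the right-hand side of~(\ref{ratlfcnid}) has the particular shape it does (namely, as the difference $\tfrac{1}{2}\bigl(\tfrac{1}{(y-m)(y-m+1)} - \tfrac{1}{y(y+1)}\bigr)$). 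Either argument suffices; the telescoping one is the better choice to present.
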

\begin{proof} By induction on $m$.  The induction step is easily
verified by hand or with a computer algebra system.
\end{proof}

\begin{lemma} \label{ciaresolns} The $c_{i}$ defined in (\ref{first
ci}) and (\ref{c ell}) for $\alpha <1$, or in (\ref{alpha=1 ci}) for
$\alpha=1$, are the solutions of the system
(\ref{uppertriangularsystem}).
\end{lemma}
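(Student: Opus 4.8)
The plan is to use the upper-triangular shape of $U$: since $U$ has nonzero diagonal it is invertible, so (\ref{uppertriangularsystem}) has a unique solution, and it suffices to check that the claimed $c_i$ satisfy each equation, which I would do by back-substitution from the bottom row up. First I would treat the case $\alpha<1$. Reading the equations from the bottom, the trailing zeros on the right-hand side of (\ref{uppertriangularsystem}) together with the nonzero diagonal entries $u_{i,i}=2(f-i+2)/(f+i)$ force $c_i=0$ for $\ell+1\le i\le f-1$, which is exactly the claim for those indices. The $\ell$-th equation then reads $u_{\ell,\ell}\,c_\ell=1-\alpha$; using $\alpha=2/(f-\ell+2)$ we get $1-\alpha=(f-\ell)/(f-\ell+2)$, and solving gives $c_\ell=\tfrac14\,\alpha\,(f+\ell)(f-\ell)/(f-\ell+2)$, which is (\ref{c ell}). (When $\alpha=2/3$, i.e. $\ell=f-1$, there are no trailing zeros and $c_\ell=c_{f-1}$, but the same computation applies verbatim.)

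It then remains to verify the $i$-th equation for $1\le i\le\ell-1$. Substituting (\ref{first ci}) for $c_i,\dots,c_{\ell-1}$ and (\ref{c ell}) for $c_\ell$ into row $i$ of (\ref{uppertriangularsystem}), with $u_{i,i}=2(f-i+2)/(f+i)$ and $u_{i,j}=4/(f+j)$ for $j>i$, every term contains a common factor $\alpha$; after dividing by $\alpha$ and moving the $c_\ell$-term to the right, the equation becomes
\[
\frac{2(f-\ell+1)}{(f-i)(f-i+1)}+4(f-\ell+1)\sum_{j=i+1}^{\ell-1}\frac{1}{(f-j)(f-j+1)(f-j+2)}=\frac{2}{f-\ell+2}.
\]
Now I would reindex the sum by $p=j-i$ and set $y=f-i$, $m=\ell-1-i$, so that the sum is $\sum_{p=1}^{m}\frac{1}{(y-p)(y-p+1)(y-p+2)}$, to which Lemma \ref{ratlfcnlemma} applies. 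Since $y-m=f-\ell+1$ and $y-m+1=f-\ell+2$, substituting the closed form (\ref{ratlfcnid}) and clearing denominators reduces the displayed equation to the polynomial identity
\[
(y-m)(y-m+1)+m(2y-m+1)=y(y+1),
\]
which expands at once. The extreme case $i=\ell-1$ is just the $m=0$ instance of this (the sum is empty).

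The case $\alpha=1$ is analogous and slightly simpler: the right-hand side of (\ref{uppertriangularsystem}) is $(1,\dots,1)^{T}$, there are no trailing zeros, and substituting (\ref{alpha=1 ci}) into row $i$ and performing the same reindexing --- now with $m=y-1$, so $y-m=1$ --- turns the $i$-th equation, via (\ref{ratlfcnid}), into $2+(y-1)(y+2)=y(y+1)$, which is immediate.

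I expect the only real obstacle to be organizational rather than conceptual: the step that is not purely mechanical is recognizing that the sum in row $i$ is exactly an instance of the identity of Lemma \ref{ratlfcnlemma} after the substitution $y=f-i$, $m=\ell-1-i$ (resp. $m=y-1$ when $\alpha=1$), and then keeping the edge cases --- $i=\ell-1$, $\alpha=2/3$, $\alpha=1$ --- straight so that the back-substitution is valid throughout. Everything else is routine algebra that I would not write out in full.
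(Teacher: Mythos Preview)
Your proposal is correct and follows essentially the same approach as the paper: both verify the claimed $c_i$ row by row via back-substitution, isolate the contribution of $c_\ell$, and reduce the $i$-th equation (for $i<\ell$) to the rational-function identity of Lemma~\ref{ratlfcnlemma} by the very same change of variables $y=f-i$, $m=\ell-1-i$. The only cosmetic difference is that the paper checks the $(\ell-1)$-th row separately before passing to general $i<\ell-1$, whereas you absorb it as the $m=0$ case.
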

\begin{proof} We give the proof for $\alpha < 1$.  The proof for
$\alpha=1$ is similar.

It is easy to verify that
\begin{eqnarray} u_{\ell,\ell} c_{\ell} & = & \frac{f-\ell}{f-\ell+2}
\label{uellell}\\ u_{\ell-1,\ell-1} c_{\ell-1} + u_{\ell-1,\ell}
c_{\ell} & = & \alpha. \nonumber
\end{eqnarray} Next we consider any $i < \ell-1$.  We wish to show
\begin{eqnarray} u_{i,i} c_{i} + u_{i,i+1}c_{i+1} + \cdots +
u_{i,\ell-1} c_{\ell-1} + u_{i,\ell} c_{\ell} & = & \alpha \nonumber
\\ \Leftrightarrow u_{i,i} c_{i} + \sum_{p=1}^{\ell-1-i} u_{i,i+p}
c_{i+p} + u_{i,\ell} c_{\ell} & = & \alpha \nonumber\\ \Leftrightarrow
u_{i,i} c_{i} + \sum_{p=1}^{\ell-1-i} u_{i,i+p} c_{i+p} & = & \alpha -
u_{\ell,\ell} c_{\ell} \nonumber\\ \Leftrightarrow u_{i,i} c_{i} +
\sum_{p=1}^{\ell-1-i} u_{i,i+p} c_{i+p}& = & \alpha^{2},
\label{toshow}
\end{eqnarray} where we have used our computation of
$u_{\ell,\ell}c_{\ell}$ in (\ref{uellell}) above and the definition of
$\alpha$ to obtain the last line (\ref{toshow}).

Now we substitute $u_{i,j}$ and $c_{j}$ into (\ref{toshow}):
\begin{eqnarray} \lefteqn{\frac{2(f\! - \!i\! + \!2)}{f\! + \!i}
\alpha \frac{(f\! + \!i)(f\! - \!\ell\! + \!1)}{(f\! - \!i)(f\! -
\!i\! + \!1)(f\! - \!i\! + \!2)} \! + \! \sum_{p=1}^{\ell\! - \!1\! -
\!i} \frac{4}{f\! + \!i\! + \!p} \alpha \frac{(f\! + \!i\! + \!p)(f\!
- \!\ell\! + \!1)}{(f\! - \!i\! - \!p)(f\! - \!i\! - \!p\! + \!1)(f\!
- \!i\! - \!p\! + \!2)} = \alpha^{2}} \nonumber \\ \Leftrightarrow
&&\frac{2(f\! - \!\ell\! + \!1)}{(f\! - \!i)(f\! - \!i\! + \!1)} \! +
\! \sum_{p=1}^{\ell\! - \!1\! - \!i} \frac{4(f\! - \!\ell\! +
\!1)}{(f\! - \!i\! - \!p)(f\! - \!i\! - \!p\! + \!1)(f\! - \!i\! -
\!p\! + \!2)} = \frac{2}{f\! - \!\ell\! + \!2} \nonumber \\
\Leftrightarrow && \frac{1}{(f\! - \!i)(f\! - \!i\! + \!1)} \! + \!
\sum_{p=1}^{\ell\! - \!1\! - \!i} \frac{2}{(f\! - \!i\! - \!p)(f\! -
\!i\! - \!p\! + \!1)(f\! - \!i\! - \!p\! + \!2)} = \frac{1}{(f\! -
\!\ell\! + \!1)(f\! - \!\ell\! + \!2)} \nonumber \\ \Leftrightarrow &&
\sum_{p=1}^{\ell\! - \!1\! - \!i} \frac{2}{(f\! - \!i\! - \!p)(f\! -
\!i\! - \!p\! + \!1)(f\! - \!i\! - \!p\! + \!2)} = \frac{1}{2} \left(
\frac{1}{(f\! - \!\ell\! + \!1)(f\! - \!\ell\! + \!2)} \! - \!
\frac{1}{(f\! - \!i)(f\! - \!i\! + \!1)} \right). \label{fell}
\end{eqnarray}

We substitute $y=f-i$ and $m=\ell-1-i$.  Then (\ref{fell}) becomes
\[ \sum_{p=1}^{m} \frac{1}{(y-p)(y-p+1)(y-p+2)} = \frac{1}{2} \left(
\frac{1}{(y-m)(y-m+1)} - \frac{1}{y(y+1)} \right),
\] which reduces to the identity (\ref{ratlfcnid}). \end{proof}

We have now proved all the lemmas used in the proof of Proposition
\ref{exp comb}.

\subsubsection{Example: $n=9$.}  Here $f = 4$.  The four critical
$\alpha$ are $2/5$ (which corresponds to $(\Pro^{1})^{9} \dblq
SL(2)$), $\frac{1}{2}$, $\frac{2}{3}$, and $1$ (which corresponds to $\M_{0,9}$).  We
use the three $F$-curves $F_{4,3,1,1}$, $F_{5,2,1,1}$, and
$F_{6,1,1,1}$ and the line bundles $\V(\frac{1}{5},9)$, $\V(\frac{1}{6},9)$,
$\V(\frac{1}{7},9)$ to form the intersection matrix:
\begin{displaymath}
\begin{array}{lccc} & \V(\frac{1}{5},9) & \V(\frac{1}{6},9) & \V(\frac{1}{7},9) \\ F_{4,3,1,1}
& 2 & \frac{2}{3} & 4/7 \\ F_{5,2,1,1} & 0 & \frac{4}{3} & \frac{4}{7} \\ F_{6,1,1,1} & 0 & 0 &
\frac{6}{7}
\end{array}
\end{displaymath} and the four vectors $C_{i} \cdot A_{\alpha}$ are
\begin{displaymath}
\begin{array}{rcccc}\alpha: & \frac{2}{5}& \frac{1}{2} & \frac{2}{3} & 1 \\ F_{4,3,1,1} \cdot
A_{\alpha}: & \frac{3}{5} & \frac{1}{2} & \frac{2}{3} & 1 \\ F_{5,2,1,1} \cdot A_{\alpha}: & 0
& \frac{1}{2} & \frac{2}{3} & 1 \\ F_{6,1,1,1} \cdot A_{\alpha}: & 0 & 0 & \frac{1}{3} & 1
\end{array}.
\end{displaymath} This leads to the following equivalences:
\begin{eqnarray} A_{2/5} & \equiv & \frac{3}{10} \V(\frac{1}{5},9) \nonumber
\\ A_{1/2} & \equiv & \frac{1}{8} \V(\frac{1}{5},9) + \frac{3}{8} \V(\frac{1}{6},9)
\nonumber \\ A_{2/3} & \equiv & \frac{1}{9} \V(\frac{1}{5},9) + \frac{1}{3}
\V(\frac{1}{6},9) + \frac{7}{18} \V(\frac{1}{7},9) \nonumber \\ A_{1} & \equiv &
\frac{1}{12} \V(\frac{1}{5},9) + \frac{1}{4} \V(\frac{1}{6},9) + \frac{7}{6}
\V(\frac{1}{7},9). \nonumber
\end{eqnarray}

\subsection{When $n$ is even} We obtain similar results by similar
methods.

Once again write $f= \lfloor n/2 \rfloor = n/2$ since $n$ is even.  We
use the curves $C_{i} = F_{f-2+i,f-i,1,1}$ for $i=1,\ldots,f-1$ as our
basis for the intersection matrix.  The resulting system is
\begin{equation} \label{evensystem} \left( \begin{array}{ccccccc} 4 &
\frac{4}{f+1} & \frac{4}{f+2} & \cdots & \frac{4}{f+j-1} & \cdots &
\frac{4}{2f-2}\\ 0 & \frac{2f}{f+1} & \frac{4}{f+2} & \vdots & \vdots
& \vdots & \vdots\\ 0 & 0 & \frac{2(f-1)}{f+2} & \vdots & \vdots &
\vdots & \vdots \\ \vdots & \vdots & 0 & \ddots & \vdots & \vdots &
\vdots \\ \vdots & \vdots & \vdots & 0 & \frac{2(f-j+2)}{f+j-1} &
\vdots & \vdots \\ \vdots & \vdots & \vdots & \vdots & \ddots & \ddots
& \frac{4}{2f-2} \\ 0 & 0 & 0 & 0 & 0 & 0 & \frac{6}{2f-2}
\end{array} \right ) \left(
\begin{array}{c} c_{1}\\ c_{2}\\ \vdots\\ \vdots\\ \vdots\\ \vdots\\
c_{f-1}
\end{array} \right) = \left(
\begin{array}{c} \alpha\\ \vdots\\ \alpha\\ 1-\alpha\\ 0\\ \vdots\\ 0
\end{array} \right),
\end{equation} and it has the solution given below.  Note that more
formulas are required when $n$ is even than when $n$ is odd.  We may
attribute this to the fact that $C_{1}$ and $\V(\frac{2}{n},n)$ are more
symmetric than any other pair under consideration, and as a result
$u_{1,1}$ does not fit the pattern observed in the other diagonal
entries of the intersection matrix when $n$ is even.

\begin{proposition} \label{n even exp comb} Suppose $n \geq 6$ is even.

If $\alpha = \frac{2}{f+1}$ (which corresponds to $(\Pro^{1})^{n}
\dblq SL(2)$), then
\[ A_{\alpha} \equiv (f-2)/(2f+2) \V (\frac{1}{f},n).
\]

If $\alpha = \frac{2}{f}$, then
\[ A_{\alpha} \equiv \frac{1}{f^2} \V \left( \frac{1}{f},n \right) +
\frac{(f+1)(f-2)}{2f^2} \V \left( \frac{1}{f+1},n \right).
\]

If $\alpha = \frac{2}{f-\ell+2}$ for some $\ell \in \{ 3, \ldots, f-1
\}$, then
\[ A_{\alpha} \equiv c_{1} \V \left( \frac{1}{f},n \right) + \cdots
c_{f-1} \V \left( \frac{1}{2f-2},n \right),
\] where
\begin{eqnarray} c_{1} & = & \frac{1}{2} \alpha
\frac{f-\ell+1}{f(f-1)}, \nonumber \\ c_{i} & = & \alpha
\frac{(f-\ell+1)(f-1+i)}{(f-i)(f-i+1)(f-i+2)} \mbox{ if $2 \leq i \leq
\ell-1$,} \nonumber \\ c_{\ell} & = & \frac{1}{4} \alpha
\frac{(f-\ell)(f+\ell-1)}{f-\ell+2}, \nonumber \\ c_{i} & = & 0 \mbox{
if $i > \ell$}.
\end{eqnarray}

If $\alpha =1$ and $n=6$ (which corresponds to $\M_{0,6}$), then
\[ A_{\alpha} \equiv \frac{1}{12} \V \left( \frac{1}{3}, 6 \right) +
\frac{2}{3} \V\left( \frac{1}{4}, 6 \right).
\]

If $\alpha =1$ and $n \geq 8$ (which corresponds to $\M_{0,n}$), then
\[ A_{\alpha} \equiv c_{1} \V \left( \frac{1}{f},n \right) + \cdots
c_{f-1} \V \left( \frac{1}{2f-2},n \right)
\] where
\begin{eqnarray} c_{1} & = & \frac{1}{2f(f-1)}, \nonumber \\ c_{i} & =
& \frac{(f-1+i)}{(f-i)(f-i+1)(f-i+2)} \mbox{ if $2 \leq i \leq f-2$,}
\nonumber \\ c_{f-1} & = & \frac{f-1}{3}.
\end{eqnarray}

The coefficients $c_i$ are nonnegative in all cases. 
\end{proposition}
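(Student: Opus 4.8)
The plan is to mirror exactly the strategy used in the odd case (Proposition \ref{exp comb}): set up the linear system (\ref{evensystem}), verify that its matrix and right-hand side are as claimed, and then check that the stated $c_i$ solve it. The intersection matrix is the system (\ref{intersectionsystem}) with the curves $C_i = C(f-2+i,f-i,1,1)$ in place of the odd-case curves; the only structural novelty is that $C_1 = C(f-1,f-1,1,1)$ is paired symmetrically with $\V(1/f,n)$ — whose odd entry is $2-(n-1)/f = (f+1)/f$, which exceeds $1$ — so the top-left entry $u_{1,1}$ must be computed separately and turns out to be $4$ rather than following the $2(f-j+2)/(f+j-1)$ pattern. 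First I would prove the even analogue of Lemma \ref{CidotVanlemma}: for each $\x$ in the symmetrization $\V(1/(f+j-1),n)$, apply Lemma \ref{phi lemma} with $\x_a$ the weight on the long tail and track where the odd entry $2(j-1)/(f+j-1)$ lands, exactly as in the three-case analysis there. This gives $u_{i,j} = 4/(f+j-1)$ for $i<j$, $u_{j,j} = 2(f-j+2)/(f+j-1)$ for $i=j\ge 2$, $u_{1,1}=4$, and $u_{i,j}=0$ for $i>j$, matching (\ref{evensystem}).

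Next I would prove the even analogue of Lemma \ref{CidotAalphalemma}: using Definition \ref{Aalpha def} together with \cite{KM} Corollary 4.4 to expand $C_i\cdot D_m$, one computes $C_i\cdot A_\alpha$ for $\alpha = 2/(f-\ell+2)$ and finds it equals $\alpha$ for $i<\ell$, $1-\alpha$ for $i=\ell$, and $0$ for $i>\ell$, with the usual degenerations: when $\alpha=2/(f+1)$ (so $\ell=1$) the right-hand side is $(1-\alpha,0,\dots,0)^T$, when $\alpha=2/f$ (so $\ell=2$) it is $(\alpha,1-\alpha,0,\dots,0)^T$, and when $\alpha=1$ it is $(1,\dots,1)^T$. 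This identifies the system completely. Finally, to confirm that the proposed $c_i$ are the solution, I would back-substitute in the upper-triangular system: the bottom equation $u_{f-1,f-1}c_{f-1} = (C_{f-1}\cdot A_\alpha)$ fixes $c_{f-1}$, and proceeding upward the equation in row $i<\ell-1$ reduces, after substituting the formulas for $u_{i,j}$ and $c_j$ and using $\alpha = 2/(f-\ell+2)$, to the same telescoping identity (\ref{ratlfcnid}) of Lemma \ref{ratlfcnlemma} under the substitution $y = f-i$, $m = \ell-1-i$; the rows $i=\ell$ and $i=\ell-1$ are checked directly. The exceptional top row ($i=1$, relevant once $\ell\ge 3$) must be handled by hand because $u_{1,1}=4$ breaks the pattern — this forces the separate formula $c_1 = \tfrac12\alpha(f-\ell+1)/(f(f-1))$ — and the small cases $\alpha=2/(f+1)$, $\alpha=2/f$, and ($\alpha=1$, $n=6$) are each a short direct solve of a $1\times1$, $2\times2$, or $2\times2$ triangular system.

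The main obstacle is bookkeeping rather than conceptual: getting the case analysis in the even analogue of Lemma \ref{CidotVanlemma} right when the odd entry $2(j-1)/(f+j-1)$ sits on the spine of $C_i$, where either the long tail or the odd entry can realize $\max$, and making sure the boundary cases ($i=j$, and especially $i=j=1$ where the odd entry exceeds $1$ and kills the contribution) are treated correctly. Once the matrix $U$ is pinned down, everything else is a routine — if lengthy — verification, and the fact that all the displayed $c_i$ are manifestly nonnegative (each is a ratio of products of positive integers times $\alpha>0$, or a positive rational) gives the effectivity statement for free, which is the point of the proposition: it shows $A_\alpha$ lies in the cone generated by the $\V(1/t,n)$ and hence is nef, independently of the $S_n$-equivariant Fulton Conjecture.
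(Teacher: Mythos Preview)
Your approach is exactly the paper's: set up the system \eqref{evensystem} with the curves $C_i = C(f-2+i,f-i,1,1)$, verify the entries and right-hand side as in Lemmas \ref{CidotVanlemma}--\ref{CidotAalphalemma}, and back-substitute using the identity \eqref{ratlfcnid}. The paper in fact gives no further details beyond ``similar results by similar methods'' and the display \eqref{evensystem}, so your outline is already more explicit than what appears there.

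That said, two arithmetic slips in your write-up would derail the case analysis if carried through. For even $n=2f$ the odd entry of $\V(1/(f+j-1),n)$ is
\[
\bar a \;=\; 2 - \frac{n-1}{f+j-1} \;=\; \frac{2j-1}{f+j-1},
\]
not $2(j-1)/(f+j-1)$. In particular for $j=1$ the odd entry is $1/f$, which equals the regular entry (this is precisely the case $a=2/n$ in the convention preceding the proposition), and certainly does not exceed $1$. The reason $u_{1,1}=4$ fails the diagonal pattern $2(f-j+2)/(f+j-1)$ is therefore not that an odd entry $>1$ kills a contribution, but the opposite: $\V(1/f,n)=L_{(1/f,\dots,1/f)}^{\otimes n}$ is fully symmetric, so \emph{every} one of the $n=2f$ summands contributes $2/f$ to $C_1\cdot \V(1/f,n)$, giving $2f\cdot(2/f)=4$. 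For $j\ge 2$, by contrast, the summands with the odd entry on the long tail of $C_j$ have $x_a>1$ and drop out, which is what produces the smaller diagonal entries. Once you correct the odd-entry formula the three-case computation goes through exactly as you describe.
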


\section{Nefness and ampleness of $A_{\alpha}$} \label{amplesection}

Since each critical $A_{\alpha}$
is an effective combination of the $L_{\x}$, and every $A_{\alpha}$ is
a convex combination of the critical $A_{\alpha}$, we immediately obtain the
following corollary:

\begin{corollary} \label{Aalphaisnef} $A_{\alpha}$ is nef on $\M_{0,n}$ for all
$\frac{4}{n+1} \leq \alpha \leq 1$.
\end{corollary}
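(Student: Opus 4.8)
The plan is to deduce Corollary \ref{Aalphaisnef} directly from the explicit expressions for the critical $A_{\alpha}$ as effective combinations of the $\V(a,n)$, together with the reduction (Claim \ref{critical alphas}) of the general case to the critical values. First I would observe that each $\V(a,n)$ is nef: it is a tensor product of the $L(a,i)$, each of which equals some $L_{\x}$, and every $L_{\x}$ is nef because it is the pullback of an ample line bundle under the morphism $\pi_{\x}$ (as noted right after the definition of $L_{\x}$; when $\x$ has a coordinate equal to $1$ we have $L_{\x}=0$, which is also nef, and when a coordinate is $0$ it is a pullback from $\M_{0,n-1}$, again nef). Hence any nonnegative rational combination of the $\V(a,n)$ is nef.

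Next I would invoke Proposition \ref{exp comb} (for $n$ odd) and the analogous Proposition for $n$ even: each critical $A_{\alpha}$ — that is, $\alpha=\frac{2}{k+1}$ for $k=2,\dots,\lfloor n/2\rfloor-1$, plus $\alpha=1$ — is written as $\sum_i c_i \V(1/(f+i),n)$ with all $c_i\ge 0$. One must check that the stated coefficients are manifestly nonnegative: for $1\le i\le \ell-1$ the formula $c_i=\alpha\frac{(f+i)(f-\ell+1)}{(f-i)(f-i+1)(f-i+2)}$ is a ratio of positive quantities since $i\le \ell-1\le f-2$ forces $f-i\ge 2>0$ and $f-\ell+1\ge 1>0$; the formula for $c_\ell$ is nonnegative since $f-\ell\ge 0$ (note $\ell\le f-1$); and the remaining $c_i$ vanish. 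The $n$ even case and the $\alpha=1$ cases are checked the same way. Therefore each critical $A_{\alpha}$ is nef.

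Finally I would handle the range $\frac{4}{n+1}\le \alpha\le 1$. The smallest critical value is $\alpha=\frac{2}{k+1}$ with $k=\lfloor n/2\rfloor-1$; since $\lfloor n/2\rfloor \le \frac{n+1}{2}$ this equals $\frac{2}{\lfloor n/2\rfloor}\ge \frac{2}{(n+1)/2}=\frac{4}{n+1}$, so $\frac{4}{n+1}$ is a lower bound consistent with the critical values, and the critical values together with $\alpha=1$ subdivide $[\frac{4}{n+1},1]$ into intervals $[\frac{2}{k+2},\frac{2}{k+1}]$. By the observation recorded before Claim \ref{critical alphas}, any $A_{\alpha}$ with $\alpha$ in such an interval is a convex combination of $A_{2/(k+2)}$ and $A_{2/(k+1)}$, i.e. of two critical (hence nef) divisors, so $A_{\alpha}$ is nef. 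This finishes the proof.

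I expect the main obstacle to be purely bookkeeping rather than conceptual: making sure the endpoint $\alpha=\frac{4}{n+1}$ really is covered (i.e. that $\frac{2}{\lfloor n/2\rfloor-1+1}$ is $\le \frac{4}{n+1}$ for $n$ even and how the odd case lines up with the bound $\lfloor (n-1)/2\rfloor$ appearing in Proposition \ref{exp comb}), and verifying positivity of the $c_i$ uniformly over all the cases (odd $n$, even $n$, $\alpha<1$, $\alpha=1$, and the small-$n$ exceptions like $n=6$). None of these steps requires new ideas — they all follow from combining Claim \ref{critical alphas}, Proposition \ref{exp comb}, its even analogue, and the nefness of the $L_{\x}$ — so the corollary is essentially immediate once those results are in hand.
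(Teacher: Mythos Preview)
Your approach is exactly the paper's: the corollary is stated immediately after the sentence ``Since each critical $A_{\alpha}$ is an effective combination of the $L_{\x}$, and every $A_{\alpha}$ is a convex combination of the critical $A_{\alpha}$, we have the following corollary,'' and that sentence is the entire proof. You have simply fleshed out the two steps (nefness of $\V(a,n)$ via nefness of each $L_{\x}$; reduction to critical $\alpha$ via Claim~\ref{critical alphas}) and added the check that the coefficients $c_i$ are nonnegative, which the paper leaves implicit.

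One small bookkeeping point worth tightening: the list of critical values in Claim~\ref{critical alphas} runs only over $k=2,\dots,\lfloor n/2\rfloor-1$, whose smallest $\alpha$ is $2/\lfloor n/2\rfloor$, and this is strictly larger than $4/(n+1)$ in both parities. The endpoint $\alpha=4/(n+1)$ (odd $n$) or $\alpha=4/(n+2)$ (even $n$) is covered not by Claim~\ref{critical alphas} but by the extra case $k=\lfloor(n-1)/2\rfloor$ in Proposition~\ref{exp comb} and by the case $\alpha=2/(f+1)$ in the even proposition; together with Simpson's convex-combination observation this fills in the bottom interval. You already anticipated this in your final paragraph, so no change of strategy is needed---just cite the propositions (rather than Claim~\ref{critical alphas} alone) when you subdivide $[4/(n+1),1]$.
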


By inspecting the formulas of the previous section more closely, we
will obtain a more precise result on the positivity of the
$A_{\alpha}$ in Corollary \ref{ample on M_0,beta}.

\begin{lemma}  Write $f =
\lfloor \frac{n}{2} \rfloor$.  Let $k \in \{ 2, \ldots, f\}$ and write $\beta = (\frac{1}{k}, \ldots, \frac{1}{k})$ and 
$\ell = f+1-k$ as in the previous section.
\begin{enumerate}
\item If $n$ is odd, and $F_{a,b,c,d}$ is an F-curve with $\#a \geq
  \#b \geq \#c \geq \#d$ and $\#a \geq n-k$, then 
$V(\frac{1}{f+j},n) \cdot F_{a,b,c,d} = 0$ for $j=1,\ldots, \ell$.
\item If $n$ is even, and $F_{a,b,c,d}$ is an F-curve with $\#a \geq
  \#b \geq \#c \geq \#d$ and $\#a \geq n-k$, then 
$V(\frac{1}{f+j-1},n) \cdot F_{a,b,c,d} = 0$ for $j=1,\ldots, \ell$.
\item  $\dim \Pic(\M_{0,\beta})^{S_n}  = \ell$.
\end{enumerate}
\end{lemma}
\begin{proof}
Recall from Definition \ref{Van def} that $V(a,n)
= \otimes_{i=1}^{n} L(a,i)$.  We show that $L(\frac{1}{f+j},i) \cdot
F_{a,b,c,d} =0$ for each $i=1,\ldots,n$.  

Suppose first that $n$ is odd.  Then the odd entry is
$2-(n-1)\frac{1}{f+j} = \frac{2j}{f+j}$.  
If the odd entry is not on the long tail of $F_{a,b,c,d}$ (that is, $i \not\in a$), then
we have
\[ x_{a} \geq (n-k) \frac{1}{f+j} \geq 1,
\]
where the rightmost inequality holds because $n-k = 2f+1-k$ and we assumed $f+1-k\geq j$.   Then by Lemma
\ref{phi lemma}, we have  $L(\frac{1}{f+j},i) \cdot F_{a,b,c,d} =0$.
If  the odd entry is on the long tail of $F_{a,b,c,d}$, then $x_a$ is
even larger, and once again we have $L(\frac{1}{f+j},i) \cdot
F_{a,b,c,d} =0$.

Similarly, if $n$ is even, then the odd entry is
$\frac{2j-1}{f+j-1}$.  If it is not on the long tail, then 
\[ x_a \geq (n-k)\frac{1}{f-k+1} \geq 1,
\]
and $x_a$ is even larger if the odd entry is on the long tail, 
and hence $L(\frac{1}{f+j},i) \cdot F_{a,b,c,d} =0$.

For the third statement of the lemma: Let $\beta$ be an arbitrary set of weights and let $\pi_{\beta}:
\M_{0,n} \rightarrow \M_{0,\beta}$ be the birational contraction
defined by Hassett.  Let $I \subset \{1,\ldots,n\}$ such that $2 \leq
\#I \leq \lfloor \frac{n}{2} \rfloor$. Then $\pi_{\beta}$ contracts
the divisor $\Delta_{I}$ on $\M_{0,n} $ if and only if $\sum_{i \in I}
b_{i} \leq 1$ and $\#I \geq 3$  or $\sum_{i \in I^{c}} b_{i} \leq 1$
and $\#I^{c} \geq 3$.  

Now let $\beta= (\frac{1}{k},\ldots,\frac{1}{k})$.  
We know that $\Pic(\M_{0,n}) = \operatorname{Span} \{ B_2,\ldots, B_{f} \}$. 
Using the facts of the paragraph above, we see that $\pi_{\beta}$ contracts $B_i$ if $i \geq k$ and $k \geq 3$.  Thus  if $k \geq 3$, we have 
\[  \dim \Pic(\M_{0,\beta})^{S_n} = (f-1) -(k-2) = f-k+1 = \ell.
\]

\end{proof}

\begin{proposition} \label{ampleness} Write $f =
\lfloor \frac{n}{2} \rfloor$.  Let $k \in \{ 2, \ldots, f\}$ and write $\beta = (\frac{1}{k}, \ldots, \frac{1}{k})$ and 
$\ell = f+1-k$ as in the previous section.
\begin{enumerate}
\item If $n$ is odd, then any combination $D=\sum_{j=1}^{\ell} c_{j} V(\frac{1}{f+j},n)$ 
with all the coefficients $c_{j} \in \Q_{>0}$ is the pullback of an ample
 $\Q$-line bundle on $\M_{0,\beta}$. 
\item If $n$ is even, then any combination $D = \sum_{j=1}^{\ell} c_{j} V(\frac{1}{f+j-1},n)$ 
with all the coefficients $c_{j} \in \Q_{>0}$ is the pullback of an ample
 $\Q$-line bundle on $\M_{0,\beta}$. 
\end{enumerate}

\end{proposition}

\begin{proof}

We prove the first statement of the proposition (when $n$ is odd).
The second statemenet ($n$ even) follows by a similar argument.  

Let $\beta$ be an arbitrary set of weights and let $\pi_{\beta}:
\M_{0,n} \rightarrow \M_{0,\beta}$ be the birational contraction
defined by Hassett.   Let $a \amalg b\amalg c \amalg d$ be a
partition of $\{1,\ldots,n\}$ and suppose $\sum_{i \in a} b_{i} \geq
\sum_{i \in b} b_{i}  \geq \sum_{i \in c} b_{i}  \geq \sum_{i \in d}
b_{i} $.  Then $\pi_{\beta}$ contracts the F-curve $F_{a,b,c,d}$ if
and only if $ \sum_{i \in b \cup c \cup d} b_{i} \leq 1$.   Moreover,
$\pi_{\beta}$ is a composition of extremal contractions (in fact,
smooth blowdowns) corresponding to the images of classes of F-curves.

Now let $\beta= (\frac{1}{k},\ldots,\frac{1}{k})$ as in the
proposition.  Each $L(\frac{1}{f+j},i)$ is the pullback of a $\Q$-divisor 
on $\M_{0,\beta}$, since $L(\frac{1}{f+j},i)$ is semiample and
$L(\frac{1}{f+j},i) \cdot F_{a,b,c,d} =0$ for every F-curve
$F_{a,b,c,d}$ which is contracted by $\pi_{\beta}$, using the previous
lemma.  Moreover, we can
even say that each $L(\frac{1}{f+j},i)$ is the pullback of a nef
$\Q$-divisor on $\M_{0,\beta}$, since $\pi_{\beta}$ is
surjective.  Symmetrizing, we have: each line bundle
$V(\frac{1}{f+j},n)$ is the pullback of a symmetric $\Q$-divisor $W_{j}$ on
$\M_{0,\beta}$.  Let $D' :=\sum c_j W_j$.   Then $\pi_{\beta}^{*} D' =
D$.  

We claim that $D'$ is ample on $\M_{0,\beta}$.  It is enough to show
that $D'$ is ample on $\M_{0,\beta} / S_n$, since the quotient
morphism $\M_{0,\beta} \rightarrow \M_{0,\beta} / S_n$ is finite.  We
know that $\dim \Pic(\M_{0,\beta})^{S_n} = \ell$ by the previous
lemma.  Since the set $\{ V(\frac{1}{f+j},n)\}_{j=1}^{\ell}$ is
linearly independent, so are the $W_{j}$.  
Since $D' :=\sum c_j W_j$,
and we assumed that all the coefficients $c_j$ are positive, $D'$ is in the
interior of the cone $\langle W_j \mid 1 \leq j \leq \ell \rangle$.
This is a full-dimensional subcone of the nef cone of
$\M_{0,\beta}/S_n$, which is a full-dimensional subcone of
$\Pic(\M_{0,\beta})^{S_n}$.  Hence, by Kleiman's criterion, $D'$ is
ample on $\M_{0,\beta}/S_n$.

The case where $n$ is even can be established by a similar argument.
\end{proof}

\begin{corollary} \label{ample on M_0,beta} Write $f =
\lfloor \frac{n}{2} \rfloor$.  Let $k \in \{ 2, \ldots, f\}$, let 
$\beta = (\frac{1}{k}, \ldots, \frac{1}{k})$,  and  let 
$\alpha = \frac{2}{k+1}$.  Then $A_{\alpha}$ is the pullback
of an ample $\Q$-line bundle on
$\M_{0,\beta}$.
\end{corollary}
\begin{proof}
First suppose $n$ is odd. By studying the coefficients $c_i$ in
Proposition \ref{exp comb} carefully, we see that $A_{\alpha}$ is
a strictly 
positive combination of $V(\frac{1}{\lfloor n/2 \rfloor +1},n)$, \ldots,
$V(\frac{1}{n-k},n)$.   Hence we may use Proposition
\ref{ampleness} to obtain the desired result.

Now suppose $n$ is even.  By studying the coefficients $c_i$ in
Proposition \ref{n even exp comb}, 
we see that $A_{\alpha}$ is a strictly
positive combination of $V(\frac{1}{n/2 },n)$, \ldots,
$V(\frac{1}{n-k-1},n)$.   We apply Proposition
\ref{ampleness} to obtain the desired result.
\end{proof}

Corollary \ref{ample on M_0,beta} together with \cite{SimpA} Corollary 2.3.5
now yields the following result, which has also recently been proved
by Fedorchuk and Smyth, and also by Kiem and Moon.

\begin{corollary}[extending \cite{SimpA} Theorem 2.4.5; cf. \cite{FS};
  cf. \cite{KM10}]
  Fix $n \geq 4$ and $\alpha$ a rational number in
  $(\frac{2}{n-1},1]$.  Let $\M_{0,n}(\alpha)$ denote the log
  canonical model of $\M_{0,n}$ with respect to $K+\alpha \Delta$.

If $\alpha$ is in the range $(\frac{2}{k+2}, \frac{2}{k+1}]$ for some
$k=1,\ldots,\lfloor \frac{n-1}{2} \rfloor$, then $\M_{0,n}(\alpha)
\cong \M_{0,(1/k,\ldots,1/k)}$.
If $\alpha$ is in the range $(\frac{2}{n-1}, \frac{2}{\lfloor
\frac{n}{2} \rfloor +1}]$, then $\M_{0,n}(\alpha) \cong (\Pro^1)^n
\dblq SL(2)$, where the linearization is given by symmetric weights.
\end{corollary}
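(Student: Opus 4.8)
The plan is to assemble the corollary from three ingredients already in place. First, Simpson's Corollary 2.3.5 gives a conditional identification of the log canonical models: \emph{provided} the relevant divisors $A_\alpha$ are nef (and, on the target side, that the appropriate ample class pulls back to $A_\alpha$), one has $\M_{0,n}(\alpha)\cong \M_{0,(1/k,\dots,1/k)}$ for $\alpha\in(\frac{2}{k+2},\frac{2}{k+1}]$, and $\M_{0,n}(\alpha)\cong(\Pro^1)^n\dblq\SL(2)$ with symmetric linearization in the remaining range $(\frac{2}{n-1},\frac{2}{\lfloor n/2\rfloor+1}]$. So the only work is to discharge Simpson's two hypotheses unconditionally. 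The reduction to critical $\alpha=\frac{2}{k+1}$, $k=1,\dots,\lfloor\frac{n-1}{2}\rfloor$, is Claim~\ref{critical alphas}: each $A_\alpha$ in an interval is a convex combination of the two critical ones at the endpoints, so nefness and the model identification propagate across the interval (the endpoint $\alpha=\frac{2}{k+1}$ lands in the closed interval $(\frac{2}{k+2},\frac{2}{k+1}]$, which is why these are the transition values).

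Second, I would invoke nefness. By Propositions~\ref{exp comb} and the even-$n$ analogue, each critical $A_\alpha$ is written as an \emph{effective} $\Q$-combination $\sum c_i\, V(1/(f+i),n)$ with all $c_i\ge 0$; since every $L_{\x}$ is a pullback of an ample bundle under a birational morphism $\pi_{\x}$, each $V(a,n)$ is nef, hence so is each $A_\alpha$. This is exactly Corollary~\ref{Aalphaisnef}, and it removes Simpson's first use of the $S_n$-equivariant Fulton conjecture. Third — and this is the genuinely new input needed for the \emph{identification} rather than mere nefness — one must know that on the target space $\M_{0,\beta}$ (resp. on the symmetric GIT quotient) the class pushing down from $A_\alpha$ is actually \emph{ample}, not just nef; this is what guarantees that the log canonical model is that space and not a further contraction of it. For $\alpha=\frac{2}{k+1}$ with $k\ge 2$ and $\beta=(1/k,\dots,1/k)$ this is Proposition~\ref{ampleness}: the effective combination $c_1V(1/f,n)+\dots+c_\ell V(1/(n-k),n)$ with all $c_i>0$ descends to an ample class on $\M_{0,\beta}$, via Theorem~\ref{thmVA} once one checks that every necessary matroid polytope is captured (Proposition~\ref{captures}). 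The boundary range $(\frac{2}{n-1},\frac{2}{\lfloor n/2\rfloor+1}]$ corresponds to $k=\lfloor n/2\rfloor$ in the even case and to the smallest critical $\alpha$; there $A_\alpha$ is a positive multiple of $V(1/\lfloor n/2\rfloor,n)$ (or the stated near-boundary combination), which descends from $\cL_{\x}$ on $(\Pro^1)^n\dblq\SL(2)$ itself — ample there by construction — and $\pi_{\x}$ is the log canonical morphism. The extremal case $k=1$, i.e. $\alpha=1$, is handled separately by Reduction 3 in the proof of Proposition~\ref{ampleness}: $\M_{0,(1)^n}\cong\M_{0,n}$ and $K+\Delta$ is classically ample on $\M_{0,n}$, so $\M_{0,n}(1)=\M_{0,n}$.

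I expect the main obstacle to be bookkeeping at the \emph{boundaries} of the $\alpha$-ranges and the low-$k$ edge cases: one must check that the half-open intervals $(\frac{2}{k+2},\frac{2}{k+1}]$ tile $(\frac{2}{n-1},1]$ correctly, that the convex-combination argument of Claim~\ref{critical alphas} delivers an identification of models (and not merely of divisor classes) at interior $\alpha$ — this requires that the two endpoint models map compatibly, which follows because the whole family of log canonical models forms a single running of the minimal model program, so an interior model dominates its neighbors and equals the common one when the two endpoint classes span the same face — and that the case $\alpha=\frac{2}{\lfloor n/2\rfloor+1}$ (the transition from a weighted space to the GIT quotient) is assigned consistently to the GIT quotient. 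None of this is deep, but it is exactly the place where an off-by-one in the floor functions would break the statement; everything else is a direct appeal to the cited results. No residual use of the Fulton conjecture remains, which is the point of the corollary.
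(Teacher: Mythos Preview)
Your proposal is correct and follows essentially the same approach as the paper: the corollary is obtained by combining Proposition~\ref{ampleness} (ampleness of $A_\alpha$ on the target $\M_{0,\beta}$) with Simpson's Corollary~2.3.5, with Corollary~\ref{Aalphaisnef} supplying the nefness hypothesis. The paper states this in a single sentence preceding the corollary, while you spell out the logic and edge cases in more detail; but the argument is the same.
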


\section{Further results}

\subsection{The matrix $U^{-1}$}
Here we give closed formulas for the matrix $U^{-1}$.  This gives an easy test for whether a symmetric divisor $L$ is in the simplicial subcone $\langle V(a,n) \mid 
  a=\dfrac1{t}, \ t\in \mathbb Z,\ \lfloor\dfrac{n}{2}\rfloor \le t\le n-2   \rangle$:  One need only multiply $U^{-1}$ with the vector of intersection numbers $[L] \cdot [C_{i}]$ for the $F$-curve classes $[C_{i}]$ defined above, and check whether the resulting vector has all nonnegative entries.  Conjecturally, this tests for membership in the $\SGC$ (see Question \ref{SGCconj}).  One need only multiply $U^{-1}$ with the vector of intersection numbers $[L] \cdot [C_{i}]$ for the $F$-curve classes $[C_{i}]$ defined above.  If the resulting vector has all nonnegative entries, then $L$ is in the $\SGC$.  Conjecturally, the converse holds.

\begin{proposition} Write  $f = \lfloor n /2 \rfloor$.  
If $n$ is odd, then 
\begin{equation}
(U^{-1})_{ij} = \left \{ \begin{array}{l}
0 \mbox{ if $j < i$} \\
\rule{0pt}{22pt} \displaystyle \frac{f+i}{2(f+2-i)} \mbox{ if $j=i$} \\
\rule{0pt}{22pt} \displaystyle \frac{(f+i)(j-1-f)}{(f+2-i)(f+1-i)(f-i)} \mbox{ if $j > i$}
\end{array} \right.
\end{equation} 
If $n$ is even, then 
\begin{equation}
(U^{-1})_{ij} = \left \{ \begin{array}{l}
0 \mbox{ if $j < i$} \\
\frac{1}{4} \mbox{ if $(i,j) = (1,1)$}\\
\rule{0pt}{22pt} \displaystyle \frac{f+i-1}{2(f+2-i)} \mbox{ if $j=i \geq 2$} \\
\rule{0pt}{22pt} \displaystyle \frac{j-1-f}{2f(f-1)}  \mbox{ if $i=1, j > i$} \\
\rule{0pt}{22pt} \displaystyle \frac{(f+i-1)(j-1-f)}{(f-i+2)(f-i+1)(f-i)} \mbox{ if $i >1, j >i$}
\end{array} \right.
\end{equation} 
\end{proposition}

\subsection{The divisor classes of the $V(a,n)$}
We have computed the classes of the main line bundles $V(a,n)$ in the $B_{i}$ basis (see \ref{Bibasis}).  

\begin{proposition}  Let $n$ be odd.  Write $f = \lfloor n/2 \rfloor$.  Fix $u \in \{1,\ldots,f-1\}$.  Let $b_k$ be the coefficients such that
$V(\frac{1}{f+u},n) = \sum_{k=2}^{f} b_k B_k$.  Then 
\begin{equation}
b_k = \left\{ \begin{array}{ll} 
\frac{2(k-1)k}{n-1} & \mbox{ if $u+k \leq f+1$,} \\
\frac{2(k-1)k}{n-1} + \frac{2(f-u-k+1)(f-u+2)}{f+u}  & \mbox{ if  $u+k = f+2$,}\\
\frac{2(k-1)k}{n-1} + \frac{2(f-u-k+1)k}{f+u} & \mbox{ if $u+k \geq f+3$.}
\end{array}
\right.
\end{equation}
Let $n$ be even.  Write $f = n/2$.  Fix $u \in \{1,\ldots,f-1\}$.  Let $b_k$ be the coefficients such that
$V(\frac{1}{f+u-1},n) = \sum_{k=2}^{f} b_k B_k$.  Then 
\begin{equation}
b_k = \left\{ \begin{array}{ll} 
\frac{2(k-1)k}{n-1} & \mbox{ if $u+k \leq f+1$,} \\
\frac{2(k-1)k}{n-1} + \frac{2(f-u-k+1)(f-u+2)}{f+u-1}  & \mbox{ if  $u+k = f+2$,}\\
\frac{2(k-1)k}{n-1} + \frac{2(f-u-k+1)k}{f+u-1} & \mbox{ if $u+k \geq f+3$.}
\end{array}
\right.
\end{equation}
\end{proposition}

\subsection{The $\V(a,n)$ do not always give $\M_{0,\beta}$'s} 

The $\V(a,n)$ are big and nef, and the previous subsection shows that
effective combinations of sufficiently many of them are ample on
certain $\M_{0,\beta}$.  Here we present an example to show that a
single $\V(a,n)$ taken by itself may not be ample on any
$\M_{0,\beta}$.

\begin{proposition} Suppose that $S$ is an $S_{n}$-equivariant line
  bundle on $\M_{0,n}$, and that the image of $\M_{0,n}$ under the
  linear system $|S|$ is isomorphic to a weighted moduli space
  $\M_{0,\beta}$.  Then it is also isomorphic to a weighted moduli
  space $\M_{0,\beta'}$, where $\beta'$ is a symmetric set of weights.
\end{proposition}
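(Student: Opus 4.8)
The plan is to exploit the $S_n$-action on $\M_{0,n}$ together with the uniqueness of the morphism defined by a semiample line bundle. Suppose $\varphi_{|S|}\colon \M_{0,n}\to \M_{0,\beta}$ is the morphism attached to $|S|$, for some (a priori nonsymmetric) weight vector $\beta=(b_1,\dots,b_n)$. Since $S$ is $S_n$-equivariant, for each $\sigma\in S_n$ the pullback $\sigma^*S$ is isomorphic to $S$, so $\varphi_{|S|}$ and $\varphi_{|S|}\circ\sigma$ are both morphisms to the same target defined by the same complete linear system; hence they agree up to an automorphism of $\M_{0,\beta}$. First I would make this precise: the contraction $c_\beta\colon\M_{0,n}\to\M_{0,\beta}$ is the morphism associated to the semiample bundle $\Psi_\beta$ (the analogue of $S$ for weights $\beta$), and two semiample bundles define the same contraction if and only if they lie on the same face of the nef cone, equivalently have the same kernel on curve classes. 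So the content to extract is: the set of curves contracted by $|S|$ is $S_n$-invariant.

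Next I would translate "isomorphic to $\M_{0,\beta}$" into a statement about which vital curves are contracted. By Hassett's description, $c_\beta$ contracts exactly the boundary divisors $\Delta_{0,I}$ with $\sum_{i\in I}b_i\le 1$ (for $|I|\ge 2$), and correspondingly the vital curves it kills are characterized combinatorially in terms of $\beta$. The hypothesis that the image under $|S|$ is $\M_{0,\beta}$ means the face of $\overline{\mathrm{Nef}}(\M_{0,n})$ cut out by $S$ equals the face cut out by $\Psi_\beta$. But $S$ being $S_n$-equivariant means this face is $S_n$-stable. Therefore the collection $\mathcal{S}=\{I : \sum_{i\in I}b_i\le 1\}$ of "small" subsets, which records the contracted boundary divisors, must be invariant under the $S_n$-action on subsets of $\{1,\dots,n\}$: if $I\in\mathcal{S}$ then $\sigma(I)\in\mathcal{S}$ for all $\sigma$. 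Concretely, whether $I$ is small depends only on $|I|$.

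Now I would recover symmetric weights from this invariant combinatorial datum. A weight datum $\beta$ for which "smallness of $I$ depends only on $|I|$" is equivalent, up to the chamber structure on the space of weights (the same wall-and-chamber decomposition of $\Delta(2,n)$ used throughout the paper), to a datum lying in an $S_n$-invariant chamber; every such chamber contains a symmetric point. Explicitly, if $m=\max\{|I| : I\in\mathcal{S}\}$ is the largest cardinality of a small set (necessarily $m<n/2$ or $m\le n/2$ with the $|I|=n/2$ case handled by the symmetry of complements), then taking $\beta'=(b',\dots,b')$ with $\frac{1}{m+1}<b'\le\frac1m$ (and $\sum b'_i=2$, forcing $b'=2/n$ in the boundary case, adjusted as in the paper's convention) produces a symmetric weight vector with exactly the same small subsets, hence $\M_{0,\beta'}\cong\M_{0,\beta}$ as images of $\M_{0,n}$, and in fact $\Psi_{\beta'}$ and $S$ define the same contraction. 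One should double-check the edge cases where the invariant chamber is a wall (so $\beta'$ has entries exactly $1/m$, putting $\M_{0,\beta'}$ on the locus where it degenerates to a GIT quotient) — there the target is $(\bP^1)^n\dblq\SL(2)$ with symmetric linearization, which is itself of the allowed form, so the statement still holds. The main obstacle I anticipate is the bookkeeping in this last step: showing rigorously that an $S_n$-invariant set of "small subsets" arising from \emph{some} weight vector is always realized by a \emph{symmetric} weight vector, i.e.\ that the invariant walls-and-chambers of the Hassett space of weights are exactly those one writes down by cardinality, including getting the boundary/GIT cases and the parity condition at $|I|=n/2$ right.
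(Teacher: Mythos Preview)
Your approach is essentially the paper's: use $S_n$-equivariance of $S$ to force the chamber of $\beta$ in Hassett's wall-and-chamber decomposition to be $S_n$-invariant, then find a symmetric weight in that chamber. The paper phrases this as ``all $\sigma\beta$ lie in the same chamber'' (proving this by exhibiting, for any wall separating $\beta$ from $\sigma\beta$, an explicit locus $T\cong\M_{0,|I|+1}$ contracted by $\pi_\beta$ but not by $\pi_{\sigma\beta}$), which is equivalent to your statement that the set $\mathcal{S}$ of small subsets is $S_n$-stable.

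The one place the paper is cleaner than your outline is in producing the symmetric $\beta'$: rather than analyzing the combinatorics of $\mathcal{S}$ to manufacture an explicit constant weight $b'$, the paper simply sets $\beta'=\frac{1}{n!}\sum_{\sigma\in S_n}\sigma\beta$. Since all $\sigma\beta$ lie in one (convex) chamber, so does their average, and that average is visibly symmetric. This one-line trick eliminates all of the bookkeeping you flag as the ``main obstacle'' --- the edge cases at $|I|=n/2$, the parity issues, and the boundary/GIT degenerations --- because it never leaves the chamber containing $\beta$. Incidentally, your condition $\sum b'_i=2$ is a slip: that normalization applies to the GIT linearizations $\x$, not to Hassett's weight data $\beta$, which only need $0<b_i\le 1$ and $\sum b_i>2$.
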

\begin{proof}
Recall from \cite{Hass} that the space of weights $\beta$ has a chamber decomposition, again given by hyperplanes of the form $\sum_{i \in I, \#I \geq 3} \beta_{i} = 1$.  If two sets of weights lie in the interior of the same chamber, then the resulting moduli spaces are isomorphic.  

First note that if the moduli spaces $\M_{0, \sigma \beta}$ all lie in a single chamber, then $\M_{0,\beta'}$, where $\beta'$ is the average of all the $\sigma \beta$, lies in this chamber too. 

Now suppose that $\M_{0,\beta}$ and $\M_{0,\sigma \beta}$ lie in two
different chambers.  There is no isomorphism between $\M_{0,\beta}$
and $\M_{0, \sigma \beta}$ commuting with $\pi_{\beta}$ and
$\pi_{\sigma \beta}$, so this contradicts $S$ being symmetric.  (To
see that no such isomorphism is possible, suppose that $\beta$
satisfies $\sum_{i \in I} \beta_{i} \leq 1$ while $\sum_{i \in I}
\sigma \beta_{i} >1$, and consider the locus $T$ in $\M_{0,n}$ whose
generic point is a curve with two components: the first component
contains the points labelled by $I$, and these and the point of
attachment are allowed to vary; the second component contains the
points of $I^{c}$, which are fixed along with the point of attachment.
Then $T$ is isomorphic to $\M_{0,|I|+1}$, and has dimension $|I|-2
\geq 1$.  But $\pi_{\beta}(T)$ is a point, while $\pi_{\sigma
  \beta}:\oM_{0,|I|+1} \to \oM_{0,(1,x_i,i\in I)}$ is birational. 

\end{proof}

Now we consider a specific example: the line bundle $\V(\frac{1}{6},8)$ when $n=8$.  We compute the intersections of $\V(\frac{1}{6},8)$ with all the $F$-curves:

\begin{displaymath}
\begin{array}{lcccc} & V(\frac{1}{6},8) \\ 
F_{3,3,1,1} &   \frac{2}{3}  \\
F_{4,2,1,1} &   \frac{2}{3}  \\
F_{5,1,1,1} &    1   \\
F_{3,2,2,1} &    \frac{1}{3}   \\
F_{2,2,2,2} &     0
\end{array}
\end{displaymath}

From this data it is clear that the linear system $V(\frac{1}{6},8)$ does not give any
$\M_{0,\beta}$.  If the image were any $\M_{0,\beta}$, then by the proposition, it would be possible to use a symmetric $\beta$.  
$V(\frac{1}{6},8)$ is zero on $F_{2,2,2,2}$, but $F_{2,2,2,2}$ is not contracted under any of the maps $\pi_{\beta}: \M_{0,n} \rightarrow \M_{0,\beta} $ with $\beta$ symmetric. 

\section*{Concluding remarks} Since our paper first appeared in a
preprint form, there
have been a number of interesting developments.  
We mention two of these below:

In \cite{Fed10}, Fedorchuk proves that Hassett's weighted pointed spaces
$\M_{0,\beta}$ are log canonical models of $(\M_{0,n},D_{\mathcal{A}} )$
  where $D_{\mathcal{A}} = \sum_{i<j} (a_i+a_j ) \Delta_{i,j} + \sum_{|I| \geq 3} \Delta_I$.
The GIT divisors $L_{\x}$ play an important role in his proof.

A second development is an emerging program to identify the images of
the morphisms associated to conformal blocks with more classical constructions in
algebraic geometry.  Vector bundles of conformal blocks first appeared
in conformal field theory in the 1980s, and they received a great
deal of attention in the 1990s, when several proofs of the Verlinde
formula appeared.  These vector bundles
$\mathbb{V}(\mathfrak{g},\ell,\vec{\lambda})$ are specified by three
pieces of data: a simple Lie algebra $\mathfrak{g}$, a nonnegative
integer $\ell$ called the level, and a set of weights $\vec{\lambda}$
for $\mathfrak{g}$.  

After our paper first appeared in preprint form, 
Fakhruddin observed that all of the GIT line bundles $L_{\x}$ are conformal blocks.  More
precisely, Fakhruddin shows that $L_{\x}$ is a multiple of $\det
\mathbb{V}(\mathfrak{sl}_2,\ell,\vec{\lambda})$, where $\ell$ and
$\vec{\lambda}$ are chosen so that  $\x = \frac{1}{\ell+1} \vec{\lambda}$.
Alternatively,  $L_{\x}$ is a multiple of $
\mathbb{V}(\mathfrak{sl}_{\ell+1},1,\vec{\lambda})$, which is a line bundle,
where we choose integers $\ell$ and $y_i$ such that $x_i =
\frac{y_i}{\ell+1}$ and use the weights $\vec{\lambda} =
(\omega_{y_1},\ldots,\omega_{y_n})$ (\cite{Fakh} Theorem 4.5 and
Remark 5.3).  Fakhruddin also proved that vector bundles of conformal
blocks on $\M_{0,n}$ are globally generated (\cite{Fakh} Lemma 2.2)

Some further developments of GIT bundles and conformal block bundles are
contained in \cite{GS10,Gian10}.

\end{document}